\DeclareMathOperator{\st}{st}
\DeclareMathOperator{\Aut}{Aut}
\DeclareMathOperator{\rst}{rst}
\newcommand{\Z}{\mathbb{Z}}
\newcommand{\N}{\mathbb{N}}
\newcommand{\F}{\mathbb{F}}
\newtheorem*{thmA}{Theorem A}
\newtheorem*{thmB}{Theorem B}
\newtheorem*{thmC}{Theorem C}
\newtheorem*{thmD}{Theorem D}
\newtheorem{thm}{Theorem}[section]
\newtheorem{lem}[thm]{Lemma}
\newtheorem{prop}[thm]{Proposition}
\newtheorem{cor}[thm]{Corollary}
\theoremstyle{definition}
\newtheorem{defi}[thm]{Definition}
\theoremstyle{remark}
\newtheorem{remark}[thm]{Remark}
\title{On the congruence subgroup property for GGS-groups}
\author[G.A.\ Fern\'andez-Alcober]{Gustavo A.\ Fern\'andez-Alcober}
\address{Department of Mathematics\\ University of the Basque Country UPV/EHU\\
48080 Bilbao, Spain}
\email{gustavo.fernandez@ehu.eus}
\author[A.\ Garrido]{Alejandra Garrido}
\address{Heinrich-Heine-Universit\"{a}t D\"{u}sseldorf\\
Mathematisches Institut\\
Universit\"{a}tsstr. 1\\
40225\\
D\"{u}sseldorf, Germany}
\email{alejandra.garrido@uni-duesseldorf.de}
\author[J.\ Uria-Albizuri]{Jone Uria-Albizuri}
\address{Department of Mathematics\\ University of the Basque Country UPV/EHU\\
48080 Bilbao, Spain}
\email{jone.uria@ehu.eus}
\date{}
\thanks{G.A.\ Fern\'andez-Alcober and J.\ Uria-Albizuri acknowledge financial support from the Spanish Government, grants MTM2011-28229-C02 and
MTM2014-53810-C2-2-P, and from the Basque Government, grants IT753-13 and IT974-16.
J.\ Uria-Albizuri is also supported by the Basque Goverment predoctoral grant PRE-2014-1-347.
This article was finished while A.\ Garrido was a postdoctoral researcher at the Universit\'e de Gen\`eve, whose support and that of the Swiss National Science Foundation she gratefully acknowledges.}
\subjclass[2010]{Primary 20E08}
\begin{document}

\begin{abstract}
We show that all GGS-groups with non-constant defining vector satisfy the congruence subgroup property. 
This provides, for every odd prime $p$, many examples of finitely generated, residually finite, non-torsion groups whose profinite completion is a pro-$p$ group, and among them we find torsion-free groups.
This answers a question of Barnea. 
On the other hand, we prove that the GGS-group with constant defining vector has an infinite congruence kernel and is not a branch group. 
\end{abstract}

\maketitle

\section{Introduction}

Groups of automorphisms of regular rooted trees have received considerable attention in the last few decades, motivated by the striking properties of some of the first examples studied.
The Grigorchuk $2$-groups \cite{grig} and the Gupta--Sidki $p$-groups \cite{guptasidki}, the most popular examples, are easily seen to be infinite finitely generated torsion groups, answering the General Burnside Problem.

There are now many generalizations in different directions of these initial examples: spinal groups, self-similar groups, branch groups, etc. 
One of the closest generalizations is the class of GGS-groups  (this stands for Grigorchuk--Gupta--Sidki, a term coined by Baumslag in \cite{Baumslag_TopicsCombinatorialGroupTheory}), whose properties have been studied by several authors \cite{Alc, Gri, Pervova_GGS, Pervova_CSP_GGS,Vovkivsky}.
The GGS-groups are finitely generated groups of automorphisms of the rooted $p$-regular tree $T$, where $p$ is an odd prime.
More precisely, to every non-zero vector $\mathbf{e}=(e_1,\dots,e_{p-1})$ with entries in $\mathbb{F}_p$ there corresponds a GGS-group $G=\langle a,b \rangle$, where $a$ is the rooted automorphism defined by the cycle $(1\; 2\; \dots\; p)$ and $b\in\st(1)$ is recursively defined by means of
\[
\psi(b)=(a^{e_1},a^{e_2},\dots,a^{e_{p-1}},b).
\]
(The definition of $\psi$, as well as the notation we use when working with automorphisms of $T$, can be found at the beginning of Section 2.)
For example, the Gupta--Sidki $p$-group corresponds to the vector $(1,-1,0,\dots,0)$. 
We say that $\mathbf{e}$ is symmetric if $e_i=e_{p-i}$ for $i=1,\ldots,p-1$.
Obviously, if the vectors $\mathbf{e}$ and $\mathbf{e'}$ are scalar multiples of each other, then they define the same GGS-group.
In particular, there is only one GGS-group with constant defining vector, which we denote by $\mathcal{G}$.
In dealing with $\mathcal{G}$, we will always assume that $\mathbf{e}=(1,\ldots,1)$.

The congruence subgroup property for subgroups of $\Aut T$ is defined by analogy with the same property  for arithmetic groups \cite{bass1967}, with the level stabilizers playing the role of congruences modulo ideals. 
More precisely, a subgroup $G$ of $\Aut T$ satisfies the congruence subgroup property
if each of its finite index subgroups contains some level stabilizer
$\st_G(n)=G\cap \st(n)$.
Taking the subgroups $\{\st_G(n) \mid n\in\N\}$ as a neighbourhood basis for the identity gives a topology on $G$, the congruence topology.
The completion $\overline G$ of $G$ with respect to this topology, which is called the \emph{congruence completion\/} of $G$, is a profinite group which is isomorphic to the closure of $G$ in $\Aut T$.
On the other hand, $G$ also embeds in its profinite completion $\widehat{G}$, and $\widehat{G}$ maps onto
$\overline{G}$. 
Now $G$ satisfying the congruence subgroup property is tantamount to the map
$\widehat{G}\to \overline{G}$ being an isomorphism.
The congruence subgroup problem asks whether this is the case and, if not, whether it is possible to determine the kernel of this map, which is called the \emph{congruence kernel\/} of $G$.

In \cite[Examples 10.1 and 10.2]{Gri}, Grigorchuk showed that the GGS-group corresponding to
$\mathbf{e}=(1,0,\ldots,0)$ is just infinite and satisfies the congruence subgroup property for $p\ge 5$, and that the same holds for all GGS-groups with $e_{p-3}=e_{p-2}=e_{p-1}=0$, provided that $p\ge 7$.
Vovkivsky proved that all torsion GGS-groups are just infinite \cite[Theorem 4]{Vovkivsky}, and then Pervova 
showed that torsion GGS-groups satisfy the congruence subgroup property \cite{Pervova_CSP_GGS}.
Observe that, according to \cite[Theorem 1]{Vovkivsky}, a GGS-group with defining vector $\mathbf{e}$ is torsion if and only if $e_1+\cdots+e_{p-1}=0$. 
As a consequence, most vectors of $\mathbb{F}_p^{p-1}$ define non-torsion GGS-groups.
Our first main result is the generalization of Pervova's theorem on the congruence subgroup property to all GGS-groups other than $\mathcal{G}$.

\begin{thmA}
 All GGS-groups with non-constant defining vector have the congruence subgroup property. 
\end{thmA}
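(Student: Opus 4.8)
The plan is to deduce Theorem A from the fact that every GGS-group with non-constant defining vector is a \emph{regular branch group}, combined with a soft reduction of the congruence subgroup property to one finite-depth inclusion. Let $G=\langle a,b\rangle$ be such a group. By a result of Fern\'andez-Alcober and Zugadi-Reizabal, $G$ is regular branch over a finite-index subgroup $K$; concretely one may take $K=G'$ when $\mathbf{e}$ is non-symmetric and $K=\gamma_3(G)$ when $\mathbf{e}$ is symmetric and non-constant. Thus $\psi(K)$ contains $K\times\cdots\times K$, and iterating, for every vertex $v$ of $T$ the rigid stabiliser $\rst_G(v)$ contains (through the isomorphism onto $G$ given by taking sections at $v$) a full copy of $K$, so $\rst_G(v)'$ contains the corresponding copy of $K'$. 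Since $G$ is also self-similar, sections of elements of $G$ again lie in $G$. Write $L_n$ for the subgroup of $\st_G(n)$ consisting of the automorphisms that fix level $n$ pointwise and act below each level-$n$ vertex as an element of $K'$; by the above $L_n\le\prod_{|v|=n}\rst_G(v)'=\rst_G(n)'$, the product being direct because rigid stabilisers of distinct vertices of the same level commute.

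Next I would take an arbitrary finite-index subgroup $H\le G$ and, passing to its normal core, reduce to $H=N\trianglelefteq G$ with $N\ne 1$ (as $G$ is infinite). By the standard structure theory of branch groups, a nontrivial normal subgroup contains $\rst_G(n)'$ for some $n$, so $N\supseteq\rst_G(n)'\supseteq L_n$. The crucial claim is then
\[
\st_G(k)\le K'\qquad\text{for some fixed } k\ge 1 .
\]
Granting this, any $g\in\st_G(n+k)$ acts below each level-$n$ vertex $v$ as a section $g|_v\in G$, and since $g$ fixes the first $n+k$ levels we get $g|_v\in\st_G(k)\le K'$; using that $G$ is regular branch over $K$ to realise each of these sections inside $G$ below $v$, one sees that $g$ is their product over $|v|=n$, so $g\in L_n$. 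Hence $\st_G(n+k)\le L_n\le N\le H$, and since $k$ is independent of $H$ this shows every finite-index subgroup of $G$ contains a level stabiliser, i.e. $G$ has the congruence subgroup property.

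The hard part will be establishing the displayed inclusion $\st_G(k)\le K'$, which is where the whole argument is concentrated and where the hypothesis that $\mathbf{e}$ is non-constant is genuinely used; for the constant vector it fails, consistently with $\mathcal{G}$ not being a branch group. I would attack it by analysing $\st_G(1),\st_G(2),\dots$ through $\psi$ and the recursion $\psi(b)=(a^{e_1},\dots,a^{e_{p-1}},b)$: commutators of $b$ with its $a$-conjugates, pushed through $\psi$, yield tuples involving the differences $a^{e_i-e_j}$, which fail to be \emph{diagonal} precisely because $\mathbf{e}$ is not constant, and these are the elements that force $\psi$ to carry a suitable level stabiliser into $K'\times\cdots\times K'$. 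In practice one would first pin down $\st_G(m)\cap K$ and then control $\st_G(m)$ modulo $K'$ for small $m$, treating separately the non-symmetric case ($K=G'$) and the symmetric non-constant case ($K=\gamma_3(G)$), where the extra symmetry forces the computation one commutator layer deeper; producing an explicit $k$ with $\st_G(k)\le K'$ in all these cases is the main obstacle, everything else being the formal argument above.
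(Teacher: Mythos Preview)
Your overall strategy coincides with the paper's: invoke the Bartholdi--Grigorchuk criterion (Proposition~\ref{criterion CSP}) with branching subgroup $K=G'$ in the non-symmetric case and $K=\gamma_3(G)$ in the symmetric non-constant case, and reduce everything to the single inclusion $\st_G(k)\le K'$. Your derivation of that reduction via $L_n\le\rst_G(n)'$ is correct and is essentially a reconstruction of Proposition~\ref{criterion CSP}.

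The gap is in your attack on the ``hard part''. You propose to compute $\st_G(m)$ modulo $K'$ directly from the recursion, but this is not how traction is obtained, and your sketch is missing the key idea: $\psi(K)$ is a \emph{subdirect product} of $G\times\cdots\times G$ (Lemmas~\ref{subdirect} and~\ref{subdirect2}). This is proved exactly from the ``differences $a^{e_i-e_{i+1}}$'' you mention---one coordinate of a suitable conjugate of $[b,a]$ (resp.\ $[b,a,a]$) is a nontrivial power of $a$, while another is of the form $a^{\ast}b$---but what matters is the consequence. Combining subdirectness with branching $K\times\cdots\times K\le\psi(K)$ yields $\psi(K')\ge[K,G]\times\cdots\times[K,G]$: for $g\in K$ pick $h\in K$ with $\psi(h)=(g,1,\ldots,1)$ and $x,y\in K$ with first coordinates $a,b$, so $[h,x],[h,y]\in K'$ have first coordinates $[g,a],[g,b]$. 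Since $\st_G(2)\le\gamma_3(G)$ and (via one more use of Proposition~\ref{branch}) $\st_G(3)\le\gamma_4(G)$, this gives $\st_G(3)\le G''$ when $\mathbf{e}$ is non-symmetric and $\st_G(4)\le\gamma_3(G)'$ when $\mathbf{e}$ is symmetric non-constant. Without the subdirect step your sketch has no mechanism to place all of $[K,G]$ into a single coordinate of $\psi(K')$, which is precisely what forces the inclusion.
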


Our proof is based on a general criterion of Bartholdi and Grigorchuk for a regular branch group to have the congruence subgroup property which, in particular, also yields that the groups in Theorem A are just infinite.
Also, it does not rely on the results of Pervova for torsion GGS-groups.

The  GGS-group with constant defining vector has a completely different behaviour.

\begin{thmB}
 The GGS-group $\mathcal{G}$ with constant defining vector has an infinite congruence kernel. 
\end{thmB}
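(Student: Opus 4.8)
The plan is to show that $\mathcal{G}$ fails the congruence subgroup property in the strongest possible way, by exhibiting an infinite descending chain of finite-index subgroups whose intersection is trivial but none of which contains any level stabilizer; equivalently, I would show that $\widehat{\mathcal{G}}$ and $\overline{\mathcal{G}}$ differ by an infinite kernel. The natural strategy, given that Theorem A was proved via the Bartholdi–Grigorchuk branch criterion, is first to prove that $\mathcal{G}$ is \emph{not} a regular branch group over any nontrivial subgroup (indeed, not a branch group at all); the congruence subgroup property is intimately tied to the branch structure, so losing branchness is the mechanism behind the failure. The key structural fact to establish is that the rigid stabilizers $\rst_{\mathcal{G}}(n)$ are ``too small'': because the defining vector is constant, the element $b$ together with its $\psi$-components interact with the $a$-action in a way that forces $b a^{-1} \cdots$ (an appropriate product over one level) to collapse, so that $\mathcal{G}$ has a nontrivial abelian (or otherwise small) quotient coming from a congruence-type map that does not factor through any $\st_{\mathcal{G}}(n)$.

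Concretely, the first step is to identify the ``constant vector'' phenomenon: set $x = a^{-1}b$ (or the analogous element making the sum of exponents vanish on the nose); since $e_1 + \cdots + e_{p-1} = p-1 \equiv -1 \pmod p$, the group $\mathcal{G}$ is non-torsion, and one computes that $\psi(b) = (a,a,\dots,a,b)$ has a very rigid form. From this I would extract a homomorphism $\mathcal{G} \to \Z$ (or onto a non-$p$ group, or onto $\Z_p$) — for instance the length-type or abelianization map sending $a \mapsto 1$, $b \mapsto 1$, whose kernel structure I can control level by level. The second step is to show that the sequence of subgroups $N_n = \mathrm{St}_{\mathcal{G}}(n)' \cdot (\text{something})$, or more likely the subgroups arising as preimages under iterated $\psi$ of a fixed finite-index subgroup detecting the $\Z$-quotient in a single coordinate, gives finite-index subgroups of $\mathcal{G}$ that are \emph{not} closed in the congruence topology. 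The point is that in each coordinate the section is again (essentially) $\mathcal{G}$, so one can ``hide'' a $\Z$-quotient arbitrarily deep in the tree; the resulting subgroups have index a fixed $p$-power but cannot contain $\st_{\mathcal{G}}(n)$ for any $n$, because $\st_{\mathcal{G}}(n)$ surjects (via the coordinate projections) onto subgroups that still see the $\Z$-quotient. Counting these gives infinitely many independent ``extra'' subgroups, hence an infinite congruence kernel.

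The main obstacle will be making the second step precise: one must produce, for each $n$, a finite-index subgroup that is open in the profinite but not the congruence topology, and verify these are genuinely independent (so the kernel is infinite, not merely nontrivial). This requires a careful analysis of $\st_{\mathcal{G}}(n)$ and its images under the $p$ coordinate projections $\psi_i$, showing that the relevant abelian quotient persists in every section — a computation that uses the constancy of $\mathbf{e}$ crucially (for a non-constant vector the corresponding quotient would be killed, which is consistent with Theorem A). I expect one needs to show $\mathcal{G}' \not\supseteq \rst_{\mathcal{G}}(1)$, or more sharply that $\mathcal{G}/\mathcal{G}'\gamma_3(\mathcal{G})$ (or a suitable congruence quotient) behaves differently from what the branch structure would predict; equivalently, that $\mathrm{St}_{\mathcal{G}}(1)/\mathrm{St}_{\mathcal{G}}(1)'$ is not finitely generated as expected, leaking a $\Z_p$-worth of quotients. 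The companion assertion that $\mathcal{G}$ is not a branch group (asserted in the abstract, likely proved alongside) should come out of the same computation: the rigid stabilizers fail to have finite index in the level stabilizers, precisely because the ``diagonal'' element forces $\rst_{\mathcal{G}}(1)$ to have infinite index in $\st_{\mathcal{G}}(1)$.
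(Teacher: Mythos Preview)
Your proposal has the right instinct --- exhibit finite-index subgroups of $\mathcal{G}$ that are not open in the congruence topology --- but the specific mechanism you describe does not work, and you miss the key structural fact that makes the paper's argument go through.

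First, there is no surjection $\mathcal{G}\to\Z$: both $a$ and $b$ have order $p$ (for $b$ this follows from $\psi(b^p)=(1,\ldots,1,b^p)$ and induction), so $\mathcal{G}/\mathcal{G}'\cong(\Z/p\Z)^2$. Your ``length-type map $a\mapsto 1$, $b\mapsto 1$'' therefore lands in $\Z/p\Z$, not $\Z$, and your scheme of ``hiding a $\Z$-quotient in a single coordinate arbitrarily deep in the tree'' has nothing to hide. Relatedly, producing subgroups of \emph{$p$-power} index that avoid all level stabilizers is exactly the hard direction here, since every congruence quotient is already a $p$-group; you give no mechanism to distinguish your subgroups from congruence ones.

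The paper's route is different and cleaner. One does not need Theorem~C (non-branchness) at all; that is proved afterwards and independently. Instead, set $y_0=ba^{-1}$ and $K=\langle y_0\rangle^{\mathcal{G}}$; the crucial computation (your element $x=a^{-1}b$ is essentially $y_0^{-1}$, so you were circling the right object) is that $K/K'$ is free abelian of rank $p-1$, so that $\mathcal{G}/K'\cong C_p\ltimes \Z^{p-1}$. This immediately yields, for any prime $q\ne p$ and any $n$, finite quotients of $\mathcal{G}$ of order $p\,q^{n(p-1)}$. Since the congruence completion $\overline{\mathcal{G}}$ is a pro-$p$ group, none of these quotients factor through it, and a short index computation shows the congruence kernel has subgroups of index divisible by $q^{n(p-1)}$ for all $n$, hence is infinite. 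The missing idea in your plan is precisely this: the infinite abelian quotient lives not in $\mathcal{G}^{\mathrm{ab}}$ but in $K^{\mathrm{ab}}$ for the index-$p$ normal subgroup $K$, and it is detected by \emph{non-$p$} finite quotients rather than by $p$-power-index subgroups constructed via coordinate projections.
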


It would be interesting to know whether the pro-$p$ completion of $\mathcal{G}$ coincides with its congruence completion, as well as to describe the congruence kernel of $\mathcal{G}$.
Previous work on the congruence subgroup problem for groups acting on rooted trees was done by Bartholdi, Siegenthaler and Zalesskii \cite{bartholdiCSP}, where they developed tools to determine the congruence kernel of branch groups.
However, these tools are not available to us, as the GGS-group with constant defining vector is not a branch group
(although it is weakly branch).
We also prove this fact, which had been mentioned  for the case $p=3$ in \cite[Proposition 7.3]{Bartholdi_Parabolic}.

\begin{thmC}
 The GGS-group $\mathcal{G}$ with constant defining vector is not a branch group.
\end{thmC}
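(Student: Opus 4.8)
Write $G=\mathcal{G}$, regarded as a group of automorphisms of the $p$-regular tree $T$. Every GGS-group acts spherically transitively, so $G$ fails to be a branch group as soon as $\rst_G(n)$ has infinite index in $G$ for some $n$, and I would try to establish this already for $n=1$. Recall that $\psi$ embeds $\st_G(1)$ as a subdirect product $N$ of the $p$-fold product $G\times\cdots\times G$, generated by the $p$ cyclic shifts of $(a,\dots,a,b)$, and that $\psi(\rst_G(1))=K\times\cdots\times K$, where $K\le G$ is the coordinate at a first-level vertex $v$ of the image of $\rst_G(v)$; the choice of $v$ is immaterial, since $a$ permutes the first-level vertices cyclically and turns $\psi$ into a cyclic shift of coordinates, so all these subgroups of $G$ coincide. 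Projecting $N$ onto the $v$-th coordinate gives a surjection $N\to G$ carrying $K\times\cdots\times K$ onto $K$, so $[G:K]\le[N:K\times\cdots\times K]=[\st_G(1):\rst_G(1)]$; conversely $N/(K\times\cdots\times K)$ embeds into $(G/K)^{p}$, so $[\st_G(1):\rst_G(1)]$ is finite whenever $[G:K]$ is. Hence $\rst_G(1)$ has finite index in $G$ if and only if $[G:K]$ is finite, and the task reduces to proving $[G:K]=\infty$.

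For this I would introduce the descending chain of normal subgroups $G_0=G$, $G_1=G'$, $G_{n+1}=\{\,g\in G_n:\psi(g)\in G_n\times\cdots\times G_n\,\}$ (these lie in $\st_G(1)$ for $n\ge1$, and one checks routinely that each has finite index in $G$). Their intersection $D$ already has infinite index in $G$: indeed $D\le\st_G(1)$ and $\psi(D)\subseteq D\times\cdots\times D$, so the surjection $\st_G(1)\to G$ obtained by composing $\psi$ with the projection onto the last coordinate sends $D$ into $D$; as $[G:\st_G(1)]=p$, a finite value of $[G:D]$ would be forced to satisfy $[G:D]\le[G:D]/p$, which is absurd. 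It therefore suffices to show $K\subseteq D$, i.e. $K\subseteq G_n$ for all $n$. The case $n=1$, namely $K\subseteq G'$, is where constancy of $\mathbf{e}$ is decisive: abelianising sections gives a homomorphism $\lambda\colon\st_G(1)\to(G/G')^{p}$, and because the vector is constant all $p$ generators $b^{a^{i}}$ have the same pattern under $\lambda$ — one coordinate $\overline{b}$ and the remaining $p-1$ equal to $\overline{a}$ — so the image of $\lambda$ consists of tuples whose coordinates are all congruent modulo the cyclic subgroup $\langle\overline{b}-\overline{a}\rangle$ and whose total is suitably constrained; an element of $\st_G(1)$ supported on a single first-level subtree then has its image under $\lambda$ concentrated in one coordinate, which forces that coordinate to vanish, i.e. $K\subseteq G'$. (If $\mathbf{e}$ is not constant the non-special entries $\overline{a^{e_{j}}}$ are no longer all equal, the image of $\lambda$ is strictly larger, and this is exactly the extra room that lets one realise non-trivial rigid-stabiliser elements and makes those GGS-groups regular branch; note also that $b^{p}=1$, which follows from $\psi(b^{p})=(1,\dots,1,b^{p})$, rules out the naive source of elements in $K$.) The inductive step, deducing $K\subseteq G_{n+1}$ from $K\subseteq G_{n}$, would then be carried out by repeating an argument of the same shape one level deeper in the tree, applied to the sections of the elements of $K$.

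The bookkeeping in the first paragraph and the base case $K\subseteq G'$ are routine; the genuine obstacle is the inductive step, which is not a formal consequence of the previous ones: chasing $\psi$ only recovers $K\subseteq G_{n-1}$ from the induction hypothesis, so to push $K$ one level deeper in the filtration one needs a fresh computation — showing that the way $(a,\dots,a,b)$ decomposes, together with the fact (forced by constancy) that the two non-trivial sections of each commutator $[b^{a^{i}},b^{a^{j}}]$ are mutually inverse and supported on adjacent coordinates, constrains the deeper sections of elements of $K$ just as the first step constrained $K$ itself. Granting this, $[G:K]=\infty$, so $\rst_G(1)$ has infinite index in $G$; since $G$ is weakly branch its rigid vertex stabilisers are all non-trivial, yet not all of its rigid level stabilisers have finite index, so $G$ is not a branch group.
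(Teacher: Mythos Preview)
Your reduction in the first paragraph is clean and correct, and the base case $K\subseteq G'$ is genuinely proved by your computation with the abelianised sections map $\lambda$. The fatal problem is the inductive step --- not because it is hard, but because it is \emph{false}. In fact your filtration collapses completely: one has $D=\bigcap_n G_n=\{1\}$. To see this, note that for this group $G'\le\st_G(1)$ (Proposition~\ref{multi_lemmas}(ii)), and your $G_n$ consist exactly of those $g$ whose sections at every level $\le n-1$ lie in $G'$; hence an element of $D$ has \emph{all} sections, at every level, inside $\st_G(1)$, which forces it to lie in $\bigcap_n\st_G(n)=1$. Your claim $K\subseteq D$ is therefore equivalent to $K=1$, i.e.\ $\rst_G(1)=1$, directly contradicting the weak branchness you invoke in your last sentence. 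Concretely, the commutator $[y_0,y_1]\in K_{\mathrm{pap}}'$ (with $y_i=(ba^{-1})^{a^i}$) lies in your $K$ via the weakly regular branch structure, yet one computes $\psi([y_0,y_1])=(b^{-2}a^2,\,a^{-2}ba,\,1,\ldots,1,\,a^{-1}b)$, whose first coordinate is not in $G'$; so already $K\not\subseteq G_2$. The heuristic you offer about sections of $[b^{a^i},b^{a^j}]$ being mutually inverse describes $\st_G(1)'$, not the rigid stabiliser, and does not propagate.

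The paper's argument avoids this trap by working not with an infinite descending filtration inside $G$ but with a single normal subgroup of infinite index. It introduces $K_{\mathrm{pap}}=\langle ba^{-1}\rangle^{\mathcal G}$, proves that $\mathcal G/K_{\mathrm{pap}}'\cong C_p\ltimes\Z^{p-1}$ is infinite (this uses the maximal-class structure of the congruence quotients and the torsion-freeness of $K_{\mathrm{pap}}/K_{\mathrm{pap}}'$), and then shows by a short product-over-coordinates argument that every element of $\rst_{\mathcal G'}(1)$ already has all sections in $K_{\mathrm{pap}}'$. Thus $\rst_{\mathcal G'}(1)\le K_{\mathrm{pap}}'$, whence $|\mathcal G':\rst_{\mathcal G'}(1)|=\infty$, and a brief lemma transfers this to $|\mathcal G:\rst_{\mathcal G}(1)|=\infty$. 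The key difference is that the paper exhibits a concrete infinite quotient of $\mathcal G$ that kills the rigid stabiliser, rather than trying to push the rigid stabiliser down through a chain whose intersection is trivial.
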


In \cite{Barnea}, Barnea asked about the existence of infinite finitely generated residually finite non-torsion groups whose profinite completion is a pro-$p$ group, and also whether such groups may even be torsion-free.
Observe that Theorem A immediately answers Barnea's first question.
Indeed, for any GGS-group whose defining vector is non-constant, the profinite completion is the same as the congruence completion and, in particular, a pro-$p$ group.
As already mentioned, most of these groups are not torsion.
As for the second question, we also get a positive answer.

\begin{thmD}
Let $G$ be the GGS-group with defining vector $\mathbf{e}=(1,\ldots,1,\lambda)\in\F_p^{p-1}$, with
$\lambda\ne 1,2$.
Then $G'$ is an infinite, finitely generated, residually finite, and torsion-free group whose profinite completion is a
pro-$p$ group.
\end{thmD}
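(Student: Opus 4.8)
The plan is to reduce Theorem~D to Theorem~A together with one substantial point, namely that $G'$ is torsion-free. Everything else is formal. Since $\mathbf e=(1,\dots,1,\lambda)$ is non-constant exactly because $\lambda\neq1$, Theorem~A applies and $\widehat G$ is isomorphic to the closure $\overline G$ of $G$ in $\Aut T$. As all vertex-labels of $a$ and of $b$ are powers of the $p$-cycle $(1\,2\,\cdots\,p)$, the group $G$, and hence $\overline G$, is contained in the iterated permutational wreath power of $C_p$ inside $\Aut T$, which is a pro-$p$ group; thus $\widehat G$ is pro-$p$. In particular every finite quotient of $G$ is a $p$-group, so the finitely generated abelian group $G/G'$, having only $p$-groups as finite quotients, is itself a finite $p$-group; hence $[G:G']<\infty$. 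Consequently $G'$ is finitely generated (a finite-index subgroup of the finitely generated group $G$), residually finite (a subgroup of the residually finite group $G$), and infinite (since $G$ is). Moreover $\widehat{G'}$ is pro-$p$: for any finite-index $N\trianglelefteq G'$ the core $\mathrm{Core}_G(N)$ is a finite-index normal subgroup of $G$ contained in $N$, so $G/\mathrm{Core}_G(N)$ is a finite $p$-group and its section $G'/N$ is a $p$-group too; since every finite quotient of $G'$ is a $p$-group, $\widehat{G'}$ is pro-$p$.

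It remains to prove that $G'$ is torsion-free, and this is where $\lambda\neq2$ enters: $e_1+\cdots+e_{p-1}=\lambda-2\neq0$, so by \cite{Vovkivsky} $G$ is not torsion (in particular $b$ has infinite order). Since $\widehat G$ is pro-$p$, every torsion element of $G$ has $p$-power order, so it suffices to show that $G'$ contains no element of order $p$. Suppose $1\neq x\in G'$ with $x^p=1$. Because $G/\st_G(1)\cong C_p$ is abelian we have $G'\le\st_G(1)$, so $\psi(x)=(x_1,\dots,x_p)$ with $x_i\in G$ and each $x_i^p=1$. The plan is a branch-group induction on a complexity measure of $x$ — counting, roughly, the occurrences of $b^{\pm1}$ in the iterated sections of $x$, after replacing any section that moves its first level by a suitable power landing in the first-level stabiliser. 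Using the regular branch structure of $G$ over $G'$ (as used in the proof of Theorem~A), so that $G'\times\cdots\times G'\le\psi(G')$, one argues that at least one section of $x$ is again a nontrivial torsion element lying, up to conjugacy, in $G'$, but of \emph{strictly smaller} complexity; the strict decrease is exactly the place where $\sum e_i\neq0$ is needed, since it prevents the $b$-syllables from being regenerated out of powers of $a$ when one passes to sections and takes $p$-th powers. Iterating produces a nontrivial torsion element of $G'$ of complexity zero, i.e.\ a nontrivial rooted (or finitary) automorphism lying in $G'$; but $G'\le\st_G(1)$ and, inductively via the same branch structure, $G'$ can contain no such element, a contradiction.

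The main obstacle is making this complexity argument precise and uniform: one has to fix the definition of the measure, verify the strict-decrease inequality in all cases — in particular tracking how sections that do not fix the first level behave under taking $p$-th powers, and which sections remain inside $G'$ — and settle the base case that $G'$ meets the group of finitary automorphisms trivially. The special shape of the vector is used only through $\lambda\neq1$ (to invoke Theorem~A) and $\lambda\neq2$ (to ensure $\sum e_i\neq0$), and both are essential: for the constant vector $\mathcal G$ and for the torsion case $\lambda=2$ the statement genuinely fails. Combining torsion-freeness of $G'$ with the properties recorded in the first paragraph yields Theorem~D.
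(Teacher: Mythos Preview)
Your first paragraph is correct and matches the paper's own reductions: Theorem~A gives the congruence subgroup property (since $\lambda\neq 1$ makes $\mathbf e$ non-constant), hence $\widehat G\cong\overline G$ is pro-$p$; and the remaining properties of $G'$ (infinite, finitely generated, residually finite, pro-$p$ completion) follow formally from $|G:G'|<\infty$.

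The torsion-freeness argument, however, is not a proof as it stands, and the gap is not merely one of detail. Two concrete problems: (a) the sections $x_i$ of an element $x\in G'$ typically do \emph{not} lie in $G'$ (for instance the first section of $[b,a]$ is $a^{-e_1}b$), and conjugation cannot repair this since $G'$ is normal; so your induction never gets restarted inside $G'$. (b) The claimed role of $\sum e_i\neq 0$ --- ``it prevents the $b$-syllables from being regenerated'' --- is the heuristic from the \emph{torsion} proofs (where $\sum e_i=0$ makes length go down under $p$-th powers), and it does not translate into a mechanism for showing elements are \emph{not} torsion. You have not defined the complexity measure, and there is no inequality that forces strict decrease along a single section known to stay in $G'$.

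The paper's argument is quite different and avoids length functions entirely. The two ingredients are: first, a coset restriction (their Lemma preceding the TF definition) showing that any $g\in G$ with $g^p=1$ must lie in $\langle a\rangle G'\cup\langle b\rangle G'\cup G'$; second, an algebraic condition on $\mathbf e$ (``property TF'') phrased via the circulant matrix $C$ with rows the cyclic shifts of $(0,e_1,\ldots,e_{p-1})$. For $g\in G'\smallsetminus\st_G(1)'$ one writes $g=b_1^{i_1}\cdots b_p^{i_p}h$ with $h\in\st_G(1)'$ and $(i_1,\ldots,i_p)$ a nonzero zero-sum vector; then the $j$th section of $g$ has the shape $a^{m_j}b_{k_j}^{i_j}f_j$ with $f_j\in G'$ and $(m_1,\ldots,m_p)=(i_1,\ldots,i_p)C$. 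Property TF is exactly the statement that some $m_ji_j\neq 0$, which forces that section out of $\langle a\rangle G'\cup\langle b\rangle G'\cup G'$ and hence (by the coset lemma) not of order $p$. For $\mathbf e=(1,\ldots,1,\lambda)$ one checks TF by replacing $C$ with $C-J$ (legitimate on the zero-sum hyperplane) and using the bidiagonal shape of $C-J$; the condition $\lambda\neq 2$ enters as $(\lambda-1)^p\neq 1$ in $\F_p$, i.e.\ $\lambda-1\neq 1$. Your observation that $\lambda\neq 2\iff\sum e_i\neq 0$ is numerically correct, but the mechanism is this circulant/linear-algebra computation, not a syllable count.
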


Actually, we get a general condition on $\mathbf{e}$, so that if the defining vector of $G$ satisfies this condition, then $G'$ will be torsion-free.

The paper is organised as follows.
In Section \ref{sec:non-constant} we give the proof of Theorem A. Section \ref{sec:constant} is devoted to the GGS-group $\mathcal{G}$ with constant defining vector and contains the proofs of Theorems B and C.
Finally, we prove Theorem D in Section \ref{sec:barnea}.

\section{GGS-groups with non-constant defining vector}\label{sec:non-constant}

In this section we prove Theorem A, i.e.\ that GGS-groups with a non-constant defining vector have the congruence subgroup property.
Before proceeding, we recall some facts about automorphisms of rooted trees and more specifically about GGS-groups.

Fix an odd prime $p$ and let $T$ be the regular rooted tree whose vertices are the elements of the monoid $X^*$ over an alphabet $X$ with $p$ elements, and two vertices $u$ and $v$ are joined by an edge if $v=ux$ or $u=vx$ for some
$x\in X$.
The set $L_n$ of all vertices of length $n$ is called the $n$th level of $T$, for every integer $n\ge 0$.
We denote by $\Aut T$ the group of automorphisms of $T$,  by $\st(v)$ the stabilizer of a vertex $v$ and 
by $\st(n)$ the stabilizer of all vertices in $L_n$.

Every vertex $v$ of $T$ is the root of a tree $T_v$ which is isomorphic to $T$, so we can define a map
$\psi_v:\st(v)\rightarrow \Aut T$ sending $g$ to its restriction $g_v$ to $T_v$.
Then we have an isomorphism 
\begin{align*}
\psi_n:\st(n) &\rightarrow \Aut T\times\overset{p^n}{\dots}\times \Aut T
\\ 
g &\mapsto (g_v)_{v\in L_n}.
\end{align*}
For simplicity, we write $\psi$ for $\psi_1$.
Observe also that $\psi_0$ is nothing but the identity map on $\Aut T$.

Now let $G$ be a subgroup of $\Aut T$.
We define $\st_G(n)$ and $\st_G(v)$ as the intersection with $G$ of the corresponding stabilizer in $\Aut T$.
Let $\rst_G(v)$ be the subgroup of all $g\in\st_G(n)$ such that $\psi_n(g)$ has all coordinates equal to $1$ except, possibly, at position $v$.
Then 
$$\rst_G(n) = \prod_{v\in L_n} \, \rst_G(v)$$
is the rigid stabilizer in $G$ of $L_n$. 
It is the largest subgroup of $\st_G(n)$ which maps onto a direct product under $\psi_n$.
If $G$ acts transitively on all levels of $T$, we say that $G$ is a branch group if $|G:\rst_G(n)|<\infty$ for all
$n$, and that $G$ is weakly branch if $\rst_G(n)\ne 1$ for all $n$.
Branch groups can be more generally defined when the rooted tree $T$ is not regular but level-homogeneous (see \cite[Section 5]{Gri}).
One can also speak about branch or weakly branch actions of a group on a rooted, level-homogeneous tree, by considering the induced group of automorphisms of the tree.

We say that $G$ is fractal if $\psi_v(\st_G(v))=G$ for all vertices $v$ of $T$; one can readily check that it suffices to require this condition for $v\in X$.
If $G$ is fractal, we say that $G$ is regular branch over a subgroup $K$
if $K\times \overset{p}{\cdots}\times K \subseteq \psi(K)$ and $K$ is of finite index in $G$.
This implies that
\[
K\times \overset{p^n}{\cdots}\times K \subseteq \psi_n(\st_G(n)) \subseteq G\times \overset{p^n}{\cdots}\times G
\]
and, as a consequence, $\rst_G(n)$ has finite index in $G$ for all $n\ge 1$.
Thus, if $G$ acts transitively on each $L_n$, it is a branch group. 
Removing the finite index constraint yields the definition of a weakly regular branch group, and if $G$ acts transitively on each $L_n$, then $G$ is in particular a weakly branch group.

As mentioned in the introduction, for every non-zero vector $\mathbf{e}=(e_1,\dots,e_{p-1})\in\mathbb{F}_p^{p-1}$ there exists a GGS-group $G=\langle a,b \rangle$.
Here, $a$ is the rooted automorphism corresponding to the cycle $\sigma=(1\; 2\; \dots\; p)$, that is,
$a(xv)=\sigma(x)v$ for all $x\in X$ and $v\in X^*$, and $b\in\st(1)$ is recursively defined by
$\psi(b)=(a^{e_1},a^{e_2},\dots,a^{e_{p-1}},b)$.
The definition of $b$ implies that $\psi_x(\st_G(1))=G$ for every $x\in X$, and consequently all GGS-groups are fractal.

\begin{remark}
\label{closure_in_sylow}
Consider the subgroup $\Aut_{\sigma} T$ of $\Aut T$ consisting of all automorphisms for which the permutation induced at every vertex of $T$ is a power of $\sigma$. 
Then $\Aut_{\sigma} T$ is a Sylow pro-$p$ subgroup of $\Aut T$ (see \cite[pp.\ 133--134]{Gri}).
By definition, every GGS-group $G$ is contained in $\Aut_{\sigma} T$, and so the congruence completion of $G$ is a pro-$p$ group.
\end{remark}

For many of the proofs, we heavily rely on \cite{Alc}, where a systematic approach to GGS-groups is given.
For the convenience of the reader, we collect here some of the results therein.

\begin{prop}
\cite[Theorem 3.2.1 and Corollary 3.2.5]{Alc}
\label{multi_lemmas}
Let $G$ be a GGS-group.
Then
\begin{enumerate}
\item
$\st_G(1)=\langle b\rangle^G=\langle b,b^a,\dots,b^{a^{p-1}}\rangle$;
\item
$\st_G(2)\leq G'\leq \st_G(1)$;
\item
$|G:G'|=p^2$ and $|G:\gamma_3(G)|=p^3$;
\item
$\st_G(2)\leq\gamma_3(G)$.
\end{enumerate}
\end{prop}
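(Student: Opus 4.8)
The plan is to prove~(i) by a normal-closure argument and~(ii)--(iv) by descending to the first level of $T$ via $\psi$. For~(i): since $a$ induces the $p$-cycle $\sigma$ on $L_1$ and $b\in\st(1)$, the quotient $G/\st_G(1)$ is cyclic of order $p$, generated by the image of $a$; in particular $\st_G(1)\ne G$. Put $N=\langle b,b^a,\dots,b^{a^{p-1}}\rangle$. Conjugation by $a$ permutes these generators cyclically (using $a^p=1$), and conjugation by $b$ stabilizes $N$ because $b\in N$; hence $N\trianglelefteq\langle a,b\rangle=G$. Since $b\in N\le\st_G(1)\subsetneq G$ and $G/N$ is a quotient of $\langle a\rangle\cong C_p$, while $|G:\st_G(1)|=p$, we get $N=\st_G(1)$; and as $N$ is the smallest normal subgroup of $G$ containing $b$, also $\st_G(1)=\langle b\rangle^G$.

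The inclusion $G'\le\st_G(1)$ in~(ii) is immediate because $G/\st_G(1)$ is abelian; for everything else I would work inside the first-level decomposition. A direct computation shows that $\psi(b^{a^i})$ is the $i$-th cyclic shift of $(a^{e_1},\dots,a^{e_{p-1}},b)$ for $0\le i\le p-1$. Since $G$ is fractal, $\psi$ maps $\st_G(1)$ into $G^p$; composing with the coordinatewise map $G^p\to(G/\st_G(1))^p\cong\F_p^p$ kills exactly $\st_G(2)$, so $\st_G(1)/\st_G(2)$ embeds into $\F_p^p$, which I identify with the regular $\F_p[C_p]$-module $\F_p[x]/(x^p-1)$ on which $a$ acts as multiplication by $x$. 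Under this embedding $b^{a^i}$ maps to $x^iw$, where $w(x)=e_1+e_2x+\dots+e_{p-1}x^{p-2}$, so the image is the ideal $(w)=((x-1)^k)$, with $k$ the multiplicity of $1$ as a root of $w$ (equivalently, $k=0$ iff $\sum e_i\ne0$). Since the image of $[a,b]$ is an associate of $(x-1)w$, the image of $G'\st_G(2)$ is the strictly smaller ideal $((x-1)^{k+1})$; hence $|G:G'\st_G(2)|=p^2$, and since $w\notin((x-1)^{k+1})$ we conclude $b\notin G'$, so that $\st_G(1)\ne G'$.

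The remaining assertions would follow once $\psi(G')$ and, one level further, $\psi(\gamma_3(G))$ are pinned down precisely enough to place $\st_G(2)$ inside them, using $\psi(\st_G(2))=\psi(\st_G(1))\cap(\st_G(1)\times\overset{p}{\cdots}\times\st_G(1))$. Granting $\st_G(2)\le G'$, the quotient $\st_G(1)/G'=\langle bG'\rangle$ is cyclic of order exactly $p$ --- it divides $p$ because $b^p\in\st_G(2)\le G'$, and is nontrivial since $b\notin G'$ --- so $|G:G'|=p^2$, which finishes~(ii) and half of~(iii). For the rest, $G'/\gamma_3(G)$ is cyclic, generated by $[a,b]\gamma_3(G)$; its order divides $p$ by the class-two identity $[a,b]^p\equiv[a^p,b]=1\pmod{\gamma_3(G)}$, and is nontrivial because $[a,b]$ is not central in $G$, which one checks via $\psi$ by computing $[a,b,a]\ne1$; hence $|G:\gamma_3(G)|=p^3$. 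Comparing $\psi(\gamma_3(G))$ with $\psi(\st_G(2))$ then gives $\st_G(2)\le\gamma_3(G)$, which is~(iv).

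The main obstacle is the explicit control of $\psi(G')$ and $\psi(\gamma_3(G))$: the inclusions ``$\le$'' and the upper bounds on the indices are the routine direction, whereas describing $\psi(G')$ sharply enough for \emph{every} non-zero vector $\mathbf{e}$ --- so as to exclude $b\in G'$, and likewise $[a,b]\in\gamma_3(G)$, and thereby to place $\st_G(2)$ inside $G'$ and inside $\gamma_3(G)$ --- is the delicate and occasionally case-dependent bookkeeping; this is essentially the content of \cite[\S3]{Alc}.
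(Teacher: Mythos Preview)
The paper itself gives no proof of this proposition; it simply quotes \cite[Theorem~3.2.1 and Corollary~3.2.5]{Alc}. So there is nothing to compare your argument against except the original source. That said, your sketch is correct in what it actually establishes --- the proof of (i) is complete, and the module-theoretic computation of $G'\st_G(2)/\st_G(2)$ inside $\F_p[x]/((x-1)^p)$ is a clean and correct way to handle the first-level quotient.

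Where you stop short is unnecessary: your setup already contains a full proof, without any need to ``pin down $\psi(G')$ and $\psi(\gamma_3(G))$'' further. The missing observation is simply that $b^p=1$ (immediate from $\psi(b^p)=(1,\dots,1,b^p)$ by a one-line recursion), so $G/G'$ is generated by two elements of order dividing $p$ and hence $|G:G'|\le p^2$. Combined with your computation $|G:G'\st_G(2)|=p^2$, this forces $|G:G'|=p^2$ and $G'=G'\st_G(2)$, i.e.\ $\st_G(2)\le G'$. That finishes (ii) and the first half of (iii) outright.

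The same module handles (iv) and the rest of (iii). Since $\st_G(1)/\st_G(2)$ is abelian, $[G',\st_G(1)]\le\st_G(2)$, so the image of $\gamma_3(G)=[G',G]$ in $\st_G(1)/\st_G(2)$ is $(x-1)\cdot((x-1)^{k+1})=((x-1)^{k+2})$, which has codimension~$1$ in $((x-1)^{k+1})$ because $k\le p-2$ (this you already know, since $w\ne 0$ has degree at most $p-2$). Hence $|G:\gamma_3(G)\st_G(2)|=p^3$; together with $|G':\gamma_3(G)|\le p$ this gives $|G:\gamma_3(G)|=p^3$ and therefore $\gamma_3(G)=\gamma_3(G)\st_G(2)$, i.e.\ $\st_G(2)\le\gamma_3(G)$. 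No direct computation of $[a,b,a]$ is needed.

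In short: your approach is right, and the ``main obstacle'' you flag is not an obstacle at all once you use $b^p=1$ to turn your index calculation for $G'\st_G(2)$ into one for $G'$ itself.
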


\begin{prop}
\cite[Lemmas 3.3.1 and 3.3.3]{Alc}
\label{branch}
Let $G$ be a GGS-group with non-constant defining vector.
Then
$$\psi(\gamma_3(\st_G(1)))=\gamma_3(G)\times\overset{p}{\cdots}\times\gamma_3(G).$$
If the defining vector is also non-symmetric, then
$$\psi(\st_G(1)')=G'\times\overset{p}{\cdots}\times G'.$$
\end{prop}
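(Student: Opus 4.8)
The plan is to prove both equalities by establishing the inclusion $\subseteq$, which is routine, and then the reverse inclusion, which I reduce to a statement about a single coordinate.

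For $\subseteq$: since $\psi$ is a homomorphism and $\psi(\st_G(1))$ is contained in the $p$-fold direct power of $G$, we get $\psi(\gamma_3(\st_G(1)))=\gamma_3(\psi(\st_G(1)))\subseteq\gamma_3(G)\times\cdots\times\gamma_3(G)$, and similarly $\psi(\st_G(1)')\subseteq G'\times\cdots\times G'$. For the reverse inclusions, note that $\gamma_3(\st_G(1))$ and $\st_G(1)'$ are characteristic in $\st_G(1)\trianglelefteq G$, hence normal in $G$ and, in particular, invariant under conjugation by $a$; since conjugation by $a$ induces a cyclic permutation of the $p$ coordinates under $\psi$, it is enough to prove $\gamma_3(G)\times 1\times\cdots\times 1\subseteq\psi(\gamma_3(\st_G(1)))$ and, in the non-symmetric case, $G'\times 1\times\cdots\times 1\subseteq\psi(\st_G(1)')$: applying the shift and multiplying then produces the whole direct power. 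Finally, since $\gamma_3(G)=\langle[a,b,a],[a,b,b]\rangle^G$ and $G'=\langle[a,b]\rangle^G$, while $\psi(\gamma_3(\st_G(1)))$ and $\psi(\st_G(1)')$ are normal in $\psi(\st_G(1))$, which maps onto $G$ under the first-coordinate projection because $G$ is fractal, it suffices to exhibit elements of $\gamma_3(\st_G(1))$ (resp.\ of $\st_G(1)'$) whose $\psi$-image is $[a,b,a]$ and $[a,b,b]$ (resp.\ $[a,b]$) in the first coordinate and trivial elsewhere: conjugating by $\psi(\st_G(1))$ then fills the first coordinate with all of $\gamma_3(G)$ (resp.\ $G'$).

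The mechanism for producing such elements uses $\st_G(1)=\langle b,b^a,\dots,b^{a^{p-1}}\rangle$ (Proposition~\ref{multi_lemmas}(i)) together with the fact that, for each $i$, $\psi(b^{a^i})$ carries the letter $b$ in exactly one coordinate and powers of $a$ in the others. Hence the $\psi$-image of any commutator of the $b^{a^i}$ is supported on at most two coordinates, the entries there being, for a double commutator, powers $[b,a^{e_m}]^{\pm1}\in G'$, and for a triple commutator, elements $[b,a^{e_m},a^{e_n}]$ or $[b,a^{e_m},b]$ of $\gamma_3(G)$. One now forms a product $\prod[b^{a^i},b^{a^j}]^{x_{ij}}$ (resp.\ a product of triple commutators and their conjugates) with exponents $x_{ij}\in\F_p$ to be determined and computes its image: the requirement that the entries outside the first coordinate vanish (say, modulo the next term of the lower central series) is a homogeneous $\F_p$-linear system in the $x_{ij}$, and one wants a solution for which the first coordinate is the prescribed generator. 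The key feature is a single ``global'' relation: for the double commutators the sum of all $p$ coordinates equals, in $G'/\gamma_3(G)\cong\F_p$, the expression $\sum_{i<j}x_{ij}(e_{i-j}-e_{j-i})$ (subscripts of $\mathbf e$ read modulo $p$), which is identically zero exactly when $e_m=e_{p-m}$ for all $m$, that is, exactly when $\mathbf e$ is symmetric; for the triple commutators the analogous coordinate-sum relation is identically zero exactly when $\mathbf e$ is constant.

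Thus, when $\mathbf e$ is non-constant the triple-commutator system can be solved, and after removing the higher-weight corrections --- which is harmless because $G$ lies in a Sylow pro-$p$ subgroup of $\Aut T$ (Remark~\ref{closure_in_sylow}), so $G/\st_G(2)$ is a finite $p$-group and $\gamma_c(G)\le\st_G(2)\le\gamma_3(G)$ (Proposition~\ref{multi_lemmas}(iv)) for $c$ large --- one obtains $[a,b,a]$ and $[a,b,b]$ in the first coordinate, whence the first equality. When $\mathbf e$ is also non-symmetric the double-commutator system can be solved to place $[a,b]$ in the first coordinate modulo $\gamma_3(G)$ and nothing elsewhere; since $\gamma_3(G)\times\cdots\times\gamma_3(G)=\psi(\gamma_3(\st_G(1)))\subseteq\psi(\st_G(1)')$ by the first equality, a correction inside $\gamma_3(G)^p$ makes the image exactly $([a,b],1,\dots,1)$, and the reduction above gives the second equality.

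The main obstacle is the combinatorial heart of the last two paragraphs: writing the commutator words explicitly, controlling the higher-weight error terms level by level so that the images land exactly where claimed, and verifying that the associated $\F_p$-linear systems are solvable precisely under the non-constant, respectively non-symmetric, hypothesis. The non-symmetric condition is the genuinely delicate point --- it is exactly what breaks the coordinate-sum relation for the double commutators, and without it $\psi(\st_G(1)')$ stays confined to $\gamma_3(G)\times\cdots\times\gamma_3(G)$ and never reaches $G'\times\cdots\times G'$.
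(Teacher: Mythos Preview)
The paper does not give its own proof of this proposition: it is quoted verbatim from \cite[Lemmas 3.3.1 and 3.3.3]{Alc}, so there is no in-paper argument to compare your proposal against. Your outline is in the right spirit and matches the strategy used in \cite{Alc}: reduce via the cyclic shift to a single coordinate, use fractality to take normal closures coordinatewise, and produce the generators $[a,b]$ (respectively $[a,b,a]$, $[a,b,b]$) from explicit commutators of the $b^{a^i}$. Your observation that the image of the commutator map is an $\langle a\rangle$-invariant subspace of $\F_p^p$, so that ``not contained in the sum-zero hyperplane'' forces it to be everything, is exactly the mechanism that makes the non-symmetric (respectively non-constant) hypothesis bite.

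That said, what you have written is, by your own admission, a plan rather than a proof: the paragraph beginning ``The main obstacle'' lists precisely the steps that remain to be done. One point deserves special attention. For the second equality you correctly bootstrap off the first: the $\gamma_3(G)$ error terms are absorbed because $\gamma_3(G)\times\cdots\times\gamma_3(G)=\psi(\gamma_3(\st_G(1)))\subseteq\psi(\st_G(1)')$. But for the \emph{first} equality your justification for removing the $\gamma_4(G)$ corrections is not sufficient as stated. Saying that $\gamma_c(G)\le\st_G(2)\le\gamma_3(G)$ for large $c$ does not by itself give a base case for the implicit downward induction: you would need $\gamma_c(G)\times\cdots\times\gamma_c(G)\subseteq\psi(\gamma_3(\st_G(1)))$ for some $c$, and nothing you have written establishes that. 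In \cite{Alc} this is handled by producing commutator words whose images are \emph{exactly} the desired elements (no error term), rather than solving a linear system modulo the next term of the lower central series; alternatively one can set up a genuine descending induction, but the base case then requires an additional argument you have not supplied.
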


The above shows that all GGS-groups with non-constant defining vector are regular branch over $\gamma_3(G)$, and even over $G'$ when the defining vector is not symmetric.
As a consequence, they are branch groups.

Our proof that GGS-groups with a non-constant defining vector have the congruence subgroup property relies on the following result (see Proposition 3.8 of \cite{Bartholdi_Parabolic} and the proof of Theorem 4 of \cite{Gri}).

\begin{prop}
\label{criterion CSP}
Let $G\leq \Aut T$ be weakly regular branch over a subgroup $K$. 
If there exists $m\in\N$ such that $\st_G(m)\leq K'$, then $G$ has the congruence subgroup property and is just infinite.
More precisely, if $1\ne N\lhd G$ and $N\not\le \st_G(n)$ then $\st_G(n+m)\le N$.
\end{prop}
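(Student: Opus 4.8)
The plan is to prove the quantitative ``more precisely'' assertion, since the congruence subgroup property and just infiniteness will follow from it formally. So I would fix $1\ne N\lhd G$ and $n\in\N$ with $N\not\le\st_G(n)$, and aim to show $\st_G(n+m)\le N$. Throughout I would use, besides the stated hypotheses, that $G$ acts transitively on each level of $T$ (which is part of the branch setting in all our applications), so that $G$ is in particular infinite. The argument splits into three steps: (i) produce a vertex $v\in L_n$ with $\rst_G(v)'\le N$; (ii) spread this over the whole $n$-th level to get $\rst_G(n)'\le N$; (iii) check $\st_G(n+m)\le\rst_G(n)'$.

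For step (i), since $N\not\le\st_G(n)$ I would pick $g\in N$ moving some vertex $v\in L_n$, say $g(v)\ne v$. For $h_1,h_2\in\rst_G(v)$ the element $c:=[[h_1,g],h_2]$ lies in $N$ because $[G,N]\le N$. The point is a support computation: writing $[h_1,g]=h_1^{-1}\,(g^{-1}h_1g)$, the factor $h_1^{-1}$ is supported on $T_v$ and $g^{-1}h_1g$ is supported on $T_{g^{-1}(v)}$, with $g^{-1}(v)\ne v$; conjugating by $h_2\in\rst_G(v)$ leaves the $T_{g^{-1}(v)}$-part unchanged, so in the double commutator that part cancels and $c$ is supported on $T_v$ alone, with $\psi_v(c)=[\psi_v(h_1)^{-1},\psi_v(h_2)]$. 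Letting $h_1,h_2$ range over $\rst_G(v)$, these $c$ generate a subgroup of $N$ that is supported on $T_v$ and whose image under the injective restriction map $\psi_v$ is $\psi_v(\rst_G(v))'=\psi_v(\rst_G(v)')$; being contained in $\rst_G(v)$ and having the same $\psi_v$-image as $\rst_G(v)'$, it must equal $\rst_G(v)'$, so $\rst_G(v)'\le N$.

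Step (ii) is routine: by level-transitivity every $w\in L_n$ is of the form $x(v)$ for some $x\in G$, and since $\rst_G(x(v))=\rst_G(v)^{x^{-1}}$ and $N$ is normal, $\rst_G(w)'=(\rst_G(v)')^{x^{-1}}\le N$; using $\rst_G(n)=\prod_{w\in L_n}\rst_G(w)$ this gives $\rst_G(n)'=\prod_{w\in L_n}\rst_G(w)'\le N$. For step (iii), fractality gives $\psi_n(\st_G(n+m))\subseteq\st_G(m)\times\overset{p^n}{\cdots}\times\st_G(m)$, because each coordinate of an element of $\st_G(n+m)$ lies in $G$ and stabilises the $m$-th level of its subtree; and the weakly regular branch condition yields $K\le\psi_w(\rst_G(w))$ for each $w\in L_n$, so that with the hypothesis $\st_G(m)\le K'$,
\[
\psi_n(\st_G(n+m))\subseteq\prod_{w\in L_n}\st_G(m)\subseteq\prod_{w\in L_n}K'\subseteq\prod_{w\in L_n}\psi_w(\rst_G(w))'=\psi_n(\rst_G(n)').
\]
Since $\psi_n$ is injective on $\st_G(n)$ and both $\st_G(n+m)$ and $\rst_G(n)'$ sit inside it, this forces $\st_G(n+m)\le\rst_G(n)'\le N$, proving the quantitative claim. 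The two named conclusions then follow: given $1\ne N\lhd G$, since $\bigcap_k\st_G(k)=1$ we have $N\not\le\st_G(n)$ for some $n$, whence $N\ge\st_G(n+m)$, a finite-index subgroup of $G$, so $G$ is just infinite; and given a finite-index $H\le G$, its normal core is a nontrivial (here we use $G$ infinite) finite-index normal subgroup, hence contains some $\st_G(n+m)$, and therefore so does $H$, which is the congruence subgroup property.

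I expect the main obstacle to be step (i): from an element of $N$ that merely fails to fix level $n$ one has to extract an element of $N$ that is rigidly supported on a single subtree $T_v$ and controllably nontrivial there. The double-commutator device is precisely what kills the parasitic second copy living over $T_{g^{-1}(v)}$; the other point requiring care is that the group $\rst_G(v)'$ thus obtained be large enough to dominate $\st_G(m)$ in every coordinate in step (iii), and this is exactly where the weakly regular branch hypothesis (giving $K\le\psi_v(\rst_G(v))$) together with the hypothesis $\st_G(m)\le K'$ enter.
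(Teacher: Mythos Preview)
Your proof is correct and reconstructs precisely the standard argument that the paper is pointing to when it cites Proposition~3.8 of \cite{Bartholdi_Parabolic} and the proof of Theorem~4 of \cite{Gri}; the paper itself does not give an independent proof of this proposition. The double-commutator trick in your step~(i) is exactly Grigorchuk's device for showing $\rst_G(n)'\le N$, and your step~(iii) is the routine translation of the regular-branch hypothesis plus $\st_G(m)\le K'$ into $\st_G(n+m)\le\rst_G(n)'$. Your explicit acknowledgment that level-transitivity is being used (needed both for step~(ii) and to guarantee $G$ is infinite) is appropriate: the paper's definition of ``weakly regular branch'' does not literally include it, but the cited sources do, and all the applications in the paper are to GGS-groups, which are level-transitive.
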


In the rest of the section we will show that, if $G$ is a GGS-group with non-symmetric defining vector, then
$G''$ contains some level stabilizer of $G$, and that the same property holds for non-constant symmetric defining vector, with $\gamma_3(G)'$ in the place of $G''$.
This will complete the proof of Theorem A.

\begin{lem}
\label{subdirect}
If $G$ is a GGS-group with non-constant defining vector, then $\psi(G')$ is a subdirect product of
$G\times\overset{p}{\cdots}\times G$.
\end{lem}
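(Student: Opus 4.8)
The plan is to show that the image $\psi(G')$ projects onto $G$ in each of the $p$ coordinates. Once we know this, subdirectness follows immediately, since $\psi(G')\le G\times\overset{p}{\cdots}\times G$ holds because $G'\le\st_G(1)$ by Proposition \ref{multi_lemmas}(ii), and $\psi(\st_G(1))\le G\times\overset{p}{\cdots}\times G$.

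First I would identify enough elements of $G'$ whose images under $\psi$ are controllable. The natural candidates are commutators $[b,b^{a^i}]$ and, more usefully, the elements $b^{-1}b^{a^i}\in\st_G(1)$ (note $b^{-1}b^{a^i}\in G'$ since $b$ and $b^{a^i}$ have the same image in $G/G'$). Applying $\psi$ to $b^{-1}b^{a^i}$, one computes the coordinates from $\psi(b)=(a^{e_1},\dots,a^{e_{p-1}},b)$ and the fact that conjugation by $a^i$ cyclically shifts the coordinate tuple. The key point is that in these images the last coordinate (or, after a shift, some chosen coordinate) picks up a factor of $b$ times a power of $a$, while the remaining coordinates are powers of $a$. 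So from such elements we can already see that $b$ lies in the projection of $\psi(G')$ to a given coordinate, \emph{modulo} the subgroup generated by the powers of $a$ that also appear.

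Next I would use the fractal property together with the non-constant hypothesis to promote this to the full group $G$. Since the defining vector is non-constant, there exist indices $i\ne j$ with $e_i\ne e_j$; forming an appropriate product of conjugates $b^{-1}b^{a^k}$ whose "$a$-parts" cancel in all but one coordinate but whose total exponent there is a nonzero multiple of $e_i-e_j$, we obtain an element of $G'$ whose $\psi$-image is $(a^t,1,\dots,1)$ for some $t\not\equiv 0\pmod p$ — this is exactly where non-constancy is essential, just as in the proof of Proposition \ref{branch}. Combined with the elements above, whose image in that coordinate is $b$ times a power of $a$, the projection of $\psi(G')$ to that coordinate contains both $a$ and $b$, hence equals $G$. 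By the transitivity of the $a$-action on $X$ (cyclic permutation of coordinates), applying a conjugate of such elements shows the projection to every coordinate is $G$. Thus $\psi(G')$ is subdirect.

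The main obstacle is the bookkeeping in the second and third steps: one must choose the conjugating exponents and the product so that the "$a$-contributions" cancel in the unwanted coordinates while leaving a nonzero power of $a$ (independently) and a copy of $b$ (from a different product) in the target coordinate. This is a linear-algebra computation over $\F_p$ in the coordinates of $\mathbf e$ and its cyclic shifts, and the non-constant hypothesis is precisely what guarantees the relevant system has the solution we need; the symmetric-versus-non-symmetric distinction does not enter here, since we only need subdirectness, not a full direct product.
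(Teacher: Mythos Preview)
Your overall strategy---show that each coordinate projection of $\psi(G')$ equals $G$---is correct and is exactly what the paper does. However, your Step~2 contains a genuine error: there is \emph{no} element of $G'$ whose $\psi$-image is $(a^t,1,\dots,1)$ with $t\not\equiv 0\pmod p$. To see this, consider the homomorphism $\Phi\colon \st_G(1)\to G/G'$ sending $h$ with $\psi(h)=(h_1,\dots,h_p)$ to $h_1\cdots h_p\,G'$. Since conjugation by $a$ merely cyclically permutes the $h_i$, we have $\Phi(b^{a^i})=\Phi(b)$ for every $i$; writing $h\in\st_G(1)$ as a word in the $b^{a^i}$ with total exponent sum $r$, one gets $\Phi(h)=\Phi(b)^r$, while $hG'=b^rG'$ in $\st_G(1)/G'\cong C_p$. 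As $G/G'$ is elementary abelian of order $p^2$, the condition $h\in G'$ (i.e.\ $p\mid r$) forces $\Phi(h)=1$. Applied to $\psi(h)=(a^t,1,\dots,1)$ this gives $a^t\in G'$, hence $p\mid t$. So no product of conjugates of $b^{-1}b^{a^k}$ can produce such an element, and the analogy with Proposition~\ref{branch} is misleading: there the elements placed in a single coordinate lie in $\gamma_3(G)$ or $G'$, never in $\langle a\rangle\smallsetminus\{1\}$.

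Fortunately the stronger claim is also unnecessary. For subdirectness you only need the first-coordinate \emph{projection} of $\psi(G')$ to contain a nontrivial power of $a$ together with an element of the form $a^{?}b$; the remaining coordinates may be anything. The paper does this in one line: choose $i$ with $e_i\ne e_{i+1}$ (non-constancy); then the first coordinate of $\psi([b,a])$ is $a^{-e_1}b$, while the first coordinate of $\psi([b,a]^{a^{-i}})$ is $a^{e_i-e_{i+1}}\ne 1$, and these two elements already generate $G$. Conjugation by powers of $a$ then handles every coordinate. Your Step~1 already essentially contains the first of these observations (since $[b,a]=b^{-1}b^a$); all that was missing was to look at one further conjugate rather than attempt the impossible cancellation of Step~2.
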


\begin{proof}
Since $b$ is defined by a non-constant vector, there exists $i\in\{1,\dots,p-1\}$ such that $e_i\neq e_{i+1}$.
Now observe that $[b,a]$ has $a^{-e_{1}}b$ in the first coordinate, while its conjugate $[b,a]^{a^{-i}}$ has the element
$a^{e_i-e_{i+1}}$. 
Since $G=\langle a^{-e_{1}}b, a^{e_{i}-e_{i+1}}\rangle$,  the projection of $\psi(G')$ on the first coordinate is the whole of $G$.
By conjugating by powers of $a$, we conclude that $\psi(G')$ is a subdirect product of
$G\times\overset{p}{\cdots}\times G$. 
\end{proof}

\begin{lem}
\label{subdirect2}
If $G$ is a GGS-group with non-constant symmetric defining vector, then $\psi(\gamma_3(G))$ is a subdirect product of $G\times\overset{p}{\cdots}\times G$.
\end{lem}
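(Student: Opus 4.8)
The plan is to reduce the symmetric case to what was already done in the non-symmetric setting by passing through $\gamma_3(G)$, exactly as Lemma~\ref{subdirect} handled $G'$. First I would recall from Proposition~\ref{branch} that for a non-constant (in particular symmetric) defining vector we have $\psi(\gamma_3(\st_G(1)))=\gamma_3(G)\times\overset{p}{\cdots}\times\gamma_3(G)$; in particular $\psi(\gamma_3(G))$ certainly contains the diagonally-embedded elements coming from $\gamma_3(\st_G(1))$, but this only gives a direct product of copies of $\gamma_3(G)$, not of $G$, so on its own it does not prove subdirectness in $G\times\cdots\times G$. The point is therefore to exhibit, for each coordinate, an element of $\gamma_3(G)$ whose projection to that coordinate is all of $G$; by conjugating by powers of $a$ (which cyclically permutes the coordinates and normalises $\gamma_3(G)$), it suffices to do this for the first coordinate.

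The key computation is to look at commutators of the form $[b,a,a]=[[b,a],a]$ and $[b,a,b]$ and read off their first coordinates via $\psi$. Writing $\psi(b)=(a^{e_1},\dots,a^{e_{p-1}},b)$ and $\psi(a^{-1}ba)$ (a conjugate of $b$) in terms of the shifted exponents, one computes $\psi([b,a])$ coordinatewise; its first coordinate is $a^{-e_1}b$ as already noted in Lemma~\ref{subdirect}. Then the first coordinate of $[b,a,a]$ and of $[b,a,b]$ are certain words in $a^{-e_1}b$ and in the powers of $a$ appearing in neighbouring coordinates of $\psi(b)$ and its $a$-conjugate. The goal is to show these first coordinates, together, generate $G$. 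Concretely: one of these commutators should contribute an element with nontrivial $a$-part (a power $a^{k}$ with $k\not\equiv 0$, using that $\mathbf e$ is non-constant so some difference $e_i-e_{i+1}\neq 0$ survives into the relevant coordinate even after the symmetry forces cancellations elsewhere), while the element $a^{-e_1}b$ supplies the $b$-part; since $G=\langle a,b\rangle$ and $a^{-e_1}b$ together with any nontrivial power of $a$ generate $G$, the first-coordinate projection of $\psi(\gamma_3(G))$ is all of $G$. Conjugating by $a^{0},a^{1},\dots,a^{p-1}$ then gives subdirectness.

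The main obstacle I anticipate is the bookkeeping forced by the symmetry hypothesis $e_i=e_{p-i}$: in Lemma~\ref{subdirect} one freely used a coordinate in which $[b,a]^{a^{-i}}$ has a pure power $a^{e_i-e_{i+1}}$, but for elements of $\gamma_3(G)$ the natural commutators $[b,a,a]$ etc.\ have first coordinates that are products of several such powers taken from consecutive positions of the defining vector, and the symmetry relations can conspire to cancel the ``obvious'' contributions. So the delicate point is to choose the right depth-$3$ commutator (some combination of $[b^{a^j},a]$ and $[b^{a^j},b^{a^k}]$ conjugated appropriately) whose first coordinate is guaranteed to have nonzero total $a$-exponent precisely when $\mathbf e$ is non-constant, using that non-constancy of a symmetric vector still forces $e_i\neq e_{i+1}$ for some $i$ with $1\le i\le p-2$. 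Once that single element is found, the rest is the same routine conjugation-by-$a$ argument as in Lemma~\ref{subdirect}, and combining with Proposition~\ref{multi_lemmas}(iv) (which gives $\st_G(2)\le\gamma_3(G)$) keeps everything inside $\gamma_3(G)$ as required.
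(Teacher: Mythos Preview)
Your approach is essentially the paper's: compute $\psi([b,a,a])$ and locate among its coordinates a nontrivial power of $a$ together with an element of the form $a^{k}b$; these generate $G$, and conjugation by powers of $a$ then yields subdirectness. The side-references to Proposition~\ref{branch}, to $[b,a,b]$, and to Proposition~\ref{multi_lemmas}(iv) are all unnecessary for this lemma --- only the single commutator $[b,a,a]$ is used.

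However, you have explicitly left unresolved the one step that actually uses the hypothesis. The coordinates of $\psi([b,a,a])$ in positions $3,\ldots,p-1$ are the powers $a^{e_{j-2}-2e_{j-1}+e_j}$, i.e.\ \emph{second} differences of $\mathbf{e}$, while position~$2$ is $a^{e_2-2e_1}b$. So what must be shown is not merely that some first difference $e_i-e_{i+1}$ is nonzero (which is what you invoke), but that some second difference $e_{i}-2e_{i+1}+e_{i+2}$ is nonzero. This is precisely where symmetry enters. The paper's trick is: take $i\in\{1,\ldots,(p-3)/2\}$ \emph{maximal} with $e_i\ne e_{i+1}$ (such $i$ exists since $p\ge 5$ and $\mathbf{e}$ is non-constant symmetric). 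Then, either by maximality or by the midpoint symmetry $e_{(p-1)/2}=e_{(p+1)/2}$, one has $e_{i+1}=e_{i+2}$, whence
\[
e_i-2e_{i+1}+e_{i+2}=e_i-e_{i+1}\ne 0.
\]
Thus the coordinate in position $i+2$ is a nontrivial power of $a$, and together with position~$2$ you are done. Without this argument (or the equivalent observation that a non-constant symmetric vector over $\F_p$ is never an arithmetic progression), your plan does not close.
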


\begin{proof}
First of all, observe that if $p=3$ and the defining vector of $G$ is symmetric, then it must be constant.
Hence $p\geq 5$.
We have
\begin{multline*}
\psi(\left[b,a,a\right])
=
(b^{-1}a^{e_1}b^{-1}a^{e_{p-1}},a^{e_2-2e_1}b,a^{e_1-2e_2+e_3},\dots
\\
\dots,a^{e_{p-3}-2e_{p-2}+e_{p-1}},a^{-e_{p-1}}ba^{e_{p-2}-e_{p-1}}).
\end{multline*}
Since $\mathbf{e}$ is non-constant and symmetric, there exists $i\in\{1,\ldots,(p-3)/2\}$ such that $e_i\ne e_{i+1}$.
Let us choose $i$ as large as possible subject to that condition.
This choice, together with $e_{(p-1)/2}=e_{(p+1)/2}$, yields that $e_{i+1}=e_{i+2}$.
Consequently $e_{i}-2e_{i+1}+e_{i+2}=e_i-e_{i+1}\neq 0$, and the coordinate of $\psi([b,a,a])$ in position $i+2$ is a generator of $\langle a \rangle$.
Since we also have $a^{e_2-2e_1}b$ in the second position of $\psi([b,a,a])$, the result follows as in the proof of
Lemma \ref{subdirect}.
\end{proof}

We can now prove Theorem A.

\begin{thm}
Let $G$ be a GGS-group with non-constant defining vector. 
Then $G$ has the congruence subgroup property and is just infinite.
\end{thm}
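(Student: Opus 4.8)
\emph{Strategy.} The plan is to reduce everything to Proposition \ref{criterion CSP}: it suffices to exhibit, for an appropriate branch subgroup $K$ of $G$, some level stabilizer of $G$ lying inside $K'$. By Proposition \ref{branch} there are two cases. If $\mathbf{e}$ is non-symmetric, then $G$ is regular branch over $K=G'$, and I would aim to show $\st_G(3)\le G''$. If $\mathbf{e}$ is symmetric (hence $p\ge 5$), then $G$ is regular branch over $K=\gamma_3(G)$, and I would aim to show $\st_G(m)\le\gamma_3(G)'$ for a suitable $m$. Either way, since regular branch implies weakly regular branch, Proposition \ref{criterion CSP} would deliver simultaneously the congruence subgroup property and just infiniteness.

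\emph{The non-symmetric case.} Here I would combine the two available inputs: from the second part of Proposition \ref{branch} and the trivial fact $\st_G(1)'\le G'$ one gets $G'\times\overset{p}{\cdots}\times G'\subseteq\psi(G')$; and by Lemma \ref{subdirect}, $\psi(G')$ projects onto $G$ in each coordinate. Commuting an element of $\psi(G')$ whose first coordinate is an arbitrary $g\in G$ against an element $(h,1,\dots,1)\in\psi(G')$ with $h\in G'$ produces $([g,h],1,\dots,1)\in\psi(G'')$; letting $g,h$ vary and conjugating by powers of $a$ (which cyclically permute coordinates and preserve $G''$), I would obtain
\[
\gamma_3(G)\times\overset{p}{\cdots}\times\gamma_3(G)\subseteq\psi(G'').
\]
Since every $g\in\st_G(3)\le\st_G(1)$ has all coordinates of $\psi(g)$ in $\st_G(2)$, hence in $\gamma_3(G)$ by Proposition \ref{multi_lemmas}(iv), this forces $\st_G(3)\le G''$, using injectivity of $\psi$ on $\st(1)$.

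\emph{The symmetric case and the main difficulty.} I would mimic the previous argument with $\gamma_3(G)$ in place of $G'$: the first part of Proposition \ref{branch} gives $\gamma_3(G)\times\overset{p}{\cdots}\times\gamma_3(G)\subseteq\psi(\gamma_3(G))$, Lemma \ref{subdirect2} gives subdirectness of $\psi(\gamma_3(G))$, and the same commutator manipulation — now with $[G,\gamma_3(G)]=\gamma_4(G)$ in the role of $[G,G']=\gamma_3(G)$ — yields
\[
\gamma_4(G)\times\overset{p}{\cdots}\times\gamma_4(G)\subseteq\psi(\gamma_3(G)').
\]
It would then suffice to know that some $\st_G(k)$ is contained in $\gamma_4(G)$, for then all coordinates of $\psi(g)$ with $g\in\st_G(k+1)$ lie in $\gamma_4(G)$, forcing $\st_G(k+1)\le\gamma_3(G)'$. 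This last point is where the real work lies, and I expect it to be the main obstacle: running the subdirectness argument once more gives $\gamma_4(G)\times\overset{p}{\cdots}\times\gamma_4(G)\subseteq\psi(\gamma_4(G))$, so $\st_G(k-1)\le\gamma_4(G)$ would imply $\st_G(k)\le\gamma_4(G)$, but this recursion has no built-in anchor. One must therefore prove directly — the analogue of Proposition \ref{multi_lemmas}(iv) one term further down the lower central series — that a level stabilizer of $G$ is already contained in $\gamma_4(G)$, which I would try to establish by an explicit computation with the recursion defining $b$, together with the coordinatewise descriptions of small level stabilizers and low lower-central terms from \cite{Alc}. With such an anchor in hand, both cases close and Proposition \ref{criterion CSP} completes the proof of Theorem A.
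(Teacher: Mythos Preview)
Your strategy is exactly the paper's: split into the non-symmetric and symmetric cases, use the branching from Proposition~\ref{branch} together with the subdirectness Lemmas~\ref{subdirect} and~\ref{subdirect2} to obtain
\[
\gamma_3(G)\times\overset{p}{\cdots}\times\gamma_3(G)\subseteq\psi(G'')
\quad\text{resp.}\quad
\gamma_4(G)\times\overset{p}{\cdots}\times\gamma_4(G)\subseteq\psi(\gamma_3(G)'),
\]
and then feed $\st_G(2)\le\gamma_3(G)$ resp.\ an anchor $\st_G(k)\le\gamma_4(G)$ into Proposition~\ref{criterion CSP}. Your non-symmetric case is complete and identical to the paper's, giving $\st_G(3)\le G''$.

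The one point you leave open --- the anchor $\st_G(k)\le\gamma_4(G)$ in the symmetric case --- does \emph{not} require an explicit coordinate computation; the paper handles it structurally. From Proposition~\ref{multi_lemmas}(iii) one has $|\st_G(1):\gamma_3(G)|=p^2$, so $\st_G(1)/\gamma_3(G)$ is abelian, whence $\st_G(1)'\le\gamma_3(G)$ and therefore
\[
\gamma_3(\st_G(1))=[\st_G(1)',\st_G(1)]\le[\gamma_3(G),G]=\gamma_4(G).
\]
Now the first part of Proposition~\ref{branch} gives
\[
\psi(\gamma_4(G))\supseteq\psi(\gamma_3(\st_G(1)))=\gamma_3(G)\times\overset{p}{\cdots}\times\gamma_3(G)\supseteq\st_G(2)\times\overset{p}{\cdots}\times\st_G(2)=\psi(\st_G(3)),
\]
so $\st_G(3)\le\gamma_4(G)$ and hence $\st_G(4)\le\gamma_3(G)'$. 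This is the missing anchor, and with it your argument closes exactly as the paper's does.
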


\begin{proof}
By Propositions \ref{branch} and \ref{criterion CSP}, it suffices to show that $G''$ or $\gamma_3(G)'$ contain some level stabilizer, according as the defining vector $\mathbf{e}$ is non-symmetric or non-constant symmetric.

Assume first that $\mathbf{e}$ is non-symmetric.
We  have $\gamma_3(G)=\langle [g,a],[g,b]\mid g\in G'\rangle$.
By Proposition \ref{branch}, for each $g\in G'$ there exists $h\in\st_G(1)'$ such that $\psi(h)=(g,1,\dots,1)$.
On the other hand, by Lemma \ref{subdirect}, there exist $x,y\in G'$ such that $\psi(x)=(a,*,\dots,*)$ and
$\psi(y)=(b,*,\dots,*)$, where each $*$ denotes an undetermined element of $G$. 
Then $\psi([h,x])=([g,a],1,\dots,1)$ and $\psi([h,y])=([g,b],1,\dots,1)$ belong to $\psi(G'')$, and consequently
$\psi(G'')\geq \gamma_3(G)\times 1\times \dots\times 1$.
Upon conjugation by powers of $a$, we get $\psi(G'')\geq \gamma_3(G)\times\cdots\times \gamma_3(G)$.
Since $\st_G(2)\le \gamma_3(G)$ by (iv) of Proposition \ref{multi_lemmas}, we conclude that 
\[
\psi(G'') \geq \st_G(2) \times \cdots \times \st_G(2) = \psi(\st_G(3)),
\]
and $G''\geq\st_G(3)$, as desired.

Now we assume that $\mathbf{e}$ is non-constant symmetric.
Arguing as above, by combining Proposition \ref{branch} and Lemma \ref{subdirect2}, we get that
$\psi(\gamma_3(G)')\geq\gamma_4(G)\times\cdots\times\gamma_4(G)$.
If we show that $\st_G(3)\le\gamma_4(G)$ then $\st_G(4)\le \gamma_3(G)'$, and we are done.
By (iii) of Proposition \ref{multi_lemmas}, we have $|\st_G(1):\gamma_3(G)|=p^2$.
Hence $\st_G(1)'\le \gamma_3(G)$ and $\gamma_3(\st_G(1))\le \gamma_4(G)$.
Then
\begin{align*}
\psi(\gamma_4(G))
&\ge
\psi(\gamma_3(\st_G(1))) = \gamma_3(G) \times \cdots \times \gamma_3(G)
\\
&\ge
\st_G(2) \times \cdots \times \st_G(2) = \psi(\st_G(3)),
\end{align*}
by using Proposition \ref{branch}.
Thus $\st_G(3)\le \gamma_4(G)$, which completes the proof.
\end{proof}

% Grigorchuk had already proved in \cite[Examples 10.1 and 10.2]{Gri} that some examples of GGS-groups have the congruence subgroup property and are just infinite. 
% Vovkivsky showed in \cite[Theorem 4]{Vovkivsky} that torsion GGS-groups are just infinite (he does not mention the torsion assumption in the theorem but he uses it in the proof).
% 
% 
% 
% The above result had been proved for some special GGS-groups by Grigorchuk \cite[Examples 10.2]{Gri}, and for torsion GGS-groups by Vovkivsky \cite[Theorem 4]{Vovkivsky}.
% Notice that, even if the statement of Theorem 4 in \cite{Vovkivsky} does not mention the condition that the group should be torsion, that is necessary in order to apply Theorem 3 of the same paper.

\section{GGS-groups with constant defining vector}
\label{sec:constant}

In this section we prove that the GGS-group $\mathcal{G}$ with constant defining vector is not a branch group and
does not have the congruence subgroup property. 
Many of the ingredients for the proofs come from the analysis of this group developed in \cite[Section 4]{Alc}. 
Following that paper, we define $y_0 = ba^{-1}$ and $y_i= y_0^{a^i}$ for every integer
$i$ and note that $y_i^b=y_i^{aa^{-1}b}=y_{i+1}^{y_1}$.
An easy computation shows that $y_{p-1}y_{p-2}\dots y_1y_0=1$.

For the convenience of the reader, we state the following two lemmas from \cite{Alc}, which will be used in the sequel.

\begin{lem}{\cite[Lemma 4.2]{Alc}}\label{lemma4.2}
If $K=\langle y_0\rangle^{\mathcal{G}}$, then:
\begin{enumerate}
\item
$|\mathcal{G}:K|=p$, and as a consequence, $\st_{\mathcal{G}}(n)\le K$ for every $n\ge 2$.
\vspace{3pt}
\item
$K=\langle y_0,\dots,y_{p-1}\rangle$.
\item
$K'\times \overset{p}{\cdots}\times K' \leq \psi(K')\leq \psi(\mathcal{G}')\leq K\times\overset{p}{\cdots}\times K$.
In particular, $\mathcal{G}$ is a weakly regular branch group over $K'$.
\end{enumerate}
\end{lem}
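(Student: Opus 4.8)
The plan is to treat the three parts in order, using throughout the coordinate map $\psi$, the fact (already recorded) that every GGS-group, and in particular $\mathcal G$, is fractal, and the fact that conjugation by $a$ cyclically permutes the $\psi$-coordinates of elements of $\st_{\mathcal G}(1)$, while conjugation by $b$ acts on each coordinate by an element of $\mathcal G$.

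For \emph{(i)}, I would first build an epimorphism $\varphi\colon\mathcal G\to\Z/p\Z$ with $\varphi(a)=\varphi(b)=1$. By Proposition~\ref{multi_lemmas}(iii) we have $|\mathcal G:\mathcal G'|=p^2$; since $a^p=1$, and since $\psi(b^p)=(1,\dots,1,b^p)$ has all coordinates in $\st_{\mathcal G}(1)$ and hence $b^p\in\st_{\mathcal G}(2)\le\mathcal G'$, both $\bar a$ and $\bar b$ have order $p$ in $\mathcal G/\mathcal G'$, so $\mathcal G/\mathcal G'\cong\langle\bar a\rangle\times\langle\bar b\rangle$ and such a $\varphi$ exists. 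Then $\varphi(y_0)=\varphi(b)-\varphi(a)=0$, so $K=\langle y_0\rangle^{\mathcal G}\le\ker\varphi$; conversely $\mathcal G/K$ is generated by $aK$, whose order divides $p$, so comparing indices yields $K=\ker\varphi$ and $|\mathcal G:K|=p$. The consequence is then immediate: $K$ has prime index, so $\mathcal G'\le K$, and $\st_{\mathcal G}(n)\le\st_{\mathcal G}(2)\le\mathcal G'\le K$ for $n\ge 2$ by Proposition~\ref{multi_lemmas}(ii).

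For \emph{(ii)}, set $L=\langle y_0,\dots,y_{p-1}\rangle\le K$. I would show $L\lhd\mathcal G$: conjugation by $a$ sends $y_i\mapsto y_{i+1}$ (indices modulo $p$), so $L^a=L$; and from $y_i^b=y_{i+1}^{y_1}$ the sets $\{y_i^b\}$ and $\{y_i^{y_1}\}$ generate the same subgroup, so $L^b=L^{y_1}=L$ because $y_1\in L$. Hence $L\lhd\mathcal G$, and since $y_0\in L$ this forces $K=\langle y_0\rangle^{\mathcal G}\le L$, so $K=L$. For \emph{(iii)}, the middle inclusion $\psi(K')\le\psi(\mathcal G')$ is immediate from $K'\le\mathcal G'\le\st_{\mathcal G}(1)$. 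For the right inclusion I would use $\mathcal G'=\langle[a,b]\rangle^{\mathcal G}$ (the quotient by this normal closure is abelian) together with the computation $\psi([a,b])=(1,\dots,1,y_1^{-1},y_1)$, whose entries all lie in $K$ by (ii): the set $M=\{g\in\st_{\mathcal G}(1):\psi(g)\in K\times\overset{p}{\cdots}\times K\}$ is a subgroup that is normal in $\mathcal G$ (stable under conjugation by $a$, which permutes coordinates, and by $b$, which conjugates each coordinate inside $K\lhd\mathcal G$) and contains $[a,b]$, whence $\mathcal G'\le M$.

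The left inclusion $K'\times\overset{p}{\cdots}\times K'\le\psi(K')$ is the technical core, and I expect it to be the main obstacle: $K'$ has infinite index in $\mathcal G$ (the group is only weakly branch), so no finite-index shortcut is available, and one must instead produce, for every basic commutator $[y_i,y_j]$, an element of $K'$ whose $\psi$-image is concentrated in a single coordinate. The key computations would be $\psi(y_0^p)=(y_1,y_2,\dots,y_{p-1},y_0)$ — so that, using its $a$-conjugates, the projection of $\psi(\mathcal G')$ onto each coordinate contains $K$ — and $\psi([y_0,b])=\psi([a^{-1},b])=(y_1^{-1},1,\dots,1,y_1)$, supported on coordinates $1$ and $p$. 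Combining these, for $l\in\Z/p\Z$ one obtains $[[y_0,b],y_l^p]\in K'$ with
\[
\psi\bigl([[y_0,b],y_l^p]\bigr)=\bigl([y_1^{-1},y_{1+l}],\,1,\dots,1,\,[y_1,y_l]\bigr).
\]
Writing $D=\{k\in\mathcal G:(k,1,\dots,1)\in\psi(K')\}$, a subgroup normalized by $K$ (using $K'\lhd\mathcal G'$ and the surjectivity of the first-coordinate projection of $\psi(\mathcal G')$ onto $K$) which, by the cyclic symmetry of $\psi(K')$, is independent of the chosen coordinate, I would argue by induction on $l$: the base case is $[y_1,y_1]=1\in D$, and if $[y_1,y_l]\in D$ then multiplying the displayed element by the inverse of $(1,\dots,1,[y_1,y_l])\in\psi(K')$ gives $[y_1^{-1},y_{1+l}]\in D$, hence $[y_1,y_{1+l}]\in D$; so $[y_1,y_l]\in D$ for all $l$. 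Conjugating the corresponding elements by $b$ inside the first coordinate (which sends $[y_1,y_m]$ to $[y_2,y_{m+1}]$) then yields $[y_i,y_j]\in D$ for all $i,j$, so $D\supseteq\langle[y_i,y_j]\rangle^K=K'$; that is, $K'\times1\times\cdots\times1\le\psi(K')$, and applying the cyclic symmetry once more gives $K'\times\overset{p}{\cdots}\times K'\le\psi(K')$. Since $\mathcal G$ is fractal, this shows $\mathcal G$ is weakly regular branch over $K'$, completing the lemma.
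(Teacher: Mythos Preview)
The paper does not give a proof of this lemma; it is quoted verbatim from \cite{Alc}. So there is no ``paper's proof'' to compare against, and the question is simply whether your argument stands on its own. It does: parts (i) and (ii) are clean, and in part (iii) the two outer inclusions are routine while your inductive production of elements of $K'$ with $\psi$-image concentrated in a single coordinate, via the commutators $[[y_0,b],y_l^p]$, is correct and nicely organised.

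One step needs a small repair. You argue that $D=\{k:(k,1,\dots,1)\in\psi(K')\}$ is normalised by $K$, using that the first-coordinate projection of $\psi(\mathcal G')$ is $K$; this is fine, but it does not by itself justify ``conjugating by $b$ inside the first coordinate'', since $b\notin K$. What actually makes that step work is that $K'\lhd\mathcal G$ (the derived subgroup of a normal subgroup is normal), so for $h\in K'$ with $\psi(h)=([y_1,y_m],1,\dots,1)$ you may conjugate by $b\in\st_{\mathcal G}(1)$ itself: then $h^b\in K'$ and, because the first coordinate of $\psi(b)$ is $a$, one gets $\psi(h^b)=([y_1,y_m]^a,1,\dots,1)=([y_2,y_{m+1}],1,\dots,1)$. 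Equivalently, fractality together with $K'\lhd\mathcal G$ gives $D\lhd\mathcal G$, and then conjugating by $a$ inside the first coordinate shifts all indices by one. With that clarification your argument for the left inclusion in (iii), and hence the whole lemma, is complete.
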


\begin{lem}{\cite[Lemmas 4.3 and 4.4]{Alc}}
\label{lemma4.3}
For every $g\in K$ we have $gg^ag^{a^2}\dots g^{a^{p-1}}\in K'$.
Moreover, if $h\in K'$ with $\psi(h)=(h_1,\dots,h_p)$ then $h_p\dots h_1\in K'$.
\end{lem}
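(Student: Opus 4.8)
The plan is to prove both assertions by computing in the abelianisation $K/K'$. By Lemma~\ref{lemma4.2}(ii), $K=\langle y_0,\dots,y_{p-1}\rangle$, so together with the relation $y_{p-1}\cdots y_1y_0=1$ we see that $K/K'$ is generated (written additively) by the classes $\bar y_0,\dots,\bar y_{p-1}$, which satisfy $\bar y_0+\cdots+\bar y_{p-1}=0$. Conjugation by $a$ normalises $K$ and $K'$, hence induces an automorphism $s$ of $K/K'$; since $y_i^a=y_{i+1}$ with indices modulo $p$, this $s$ is the cyclic shift $\bar y_i\mapsto\bar y_{i+1}$, and $s^p=1$. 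Moreover $K$ acts trivially on $K/K'$ by conjugation and, by Lemma~\ref{lemma4.2}(i), $\mathcal{G}/K$ is cyclic of order $p$ generated by $aK$; hence conjugation by \emph{any} element of the coset $aK$ induces the shift $s$ on $K/K'$.

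The first assertion follows quickly. For $g\in K$ we have $\overline{g^{a^i}}=s^i(\bar g)$, so the class of $gg^ag^{a^2}\cdots g^{a^{p-1}}$ in $K/K'$ equals $(1+s+\cdots+s^{p-1})(\bar g)$; as $K/K'$ is generated as a module over $\Z[\langle s\rangle]$ by $\bar y_0$, it suffices to observe that $(1+s+\cdots+s^{p-1})(\bar y_0)=\bar y_0+\bar y_1+\cdots+\bar y_{p-1}=0$. Hence $gg^ag^{a^2}\cdots g^{a^{p-1}}\in K'$.

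For the second assertion I would proceed as follows. By Lemma~\ref{lemma4.2}(iii) we have $\psi(K')\le K\times\overset{p}{\cdots}\times K$, so for $h\in K'$ with $\psi(h)=(h_1,\dots,h_p)$ every coordinate $h_i$ lies in $K$; thus $\nu(h):=\overline{h_ph_{p-1}\cdots h_1}=\sum_i\bar h_i$ defines a map $\nu\colon K'\to K/K'$, which is a group homomorphism since $\psi$ is a homomorphism on $\st(1)\supseteq K'$ and $K/K'$ is abelian. The goal is to show $\nu\equiv 0$. Two coordinate computations are needed. First, conjugation by the rooted automorphism $a$ merely permutes the coordinates of $\psi(c)$, so $\nu(c^a)=\nu(c)$ for all $c\in K'$. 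Second, the wreath recursion (extended to $\Aut_{\sigma}T$) gives that the permutation induced at the root by $y_i$ is $\sigma^{-1}$ and that all $p$ coordinates of $\psi(y_i)$ lie in the coset $aK$ --- for $y_0$ these coordinates are $a,\dots,a,b$ and $b=y_0a\in aK$, and for general $i$ one conjugates by a rooted automorphism, which only permutes coordinates; hence, for $c\in K'$, the coordinates of $\psi(c^{y_i})$ are those of $\psi(c)$ each conjugated by an element of $aK$ and then permuted, so $\nu(c^{y_i})=s(\nu(c))$. Now $K'=\langle [y_i,y_j]^v : v\in K\rangle$ (since $K=\langle y_0,\dots,y_{p-1}\rangle$); writing $v$ as a word in $y_0^{\pm1},\dots,y_{p-1}^{\pm1}$ and using $\nu(c^{y_i^{\pm1}})=s^{\pm1}(\nu(c))$ repeatedly gives $\nu([y_i,y_j]^v)=s^m(\nu([y_i,y_j]))$ for some $m\in\Z$, so, $s$ being an automorphism, it is enough to show $\nu([y_i,y_j])=0$. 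Applying $\nu(c^a)=\nu(c)$ to the identity $[y_i,y_j]^{a^{-i}}=[y_0,y_{j-i}]$ reduces this to $\nu([y_0,y_k])=0$ for $1\le k\le p-1$, and a direct computation with the wreath recursion shows that $\psi([y_0,y_k])$ has at most four nontrivial coordinates, whose classes in $K/K'$ sum to $0$. Therefore $\nu\equiv0$ on $K'$, which is the second assertion.

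The step I expect to require real work is the final one, the evaluation $\nu([y_0,y_k])=0$: one must write $\psi([y_0,y_k])$ out via the wreath recursion, identify the (at most four) nontrivial coordinates, and treat separately the few values of $k$ for which the ``active'' positions collide --- routine but fiddly. I would also emphasise that the hypothesis $h\in K'$ cannot be relaxed: for $h$ in a larger subgroup such as $\st_{\mathcal{G}}(1)$ the coordinates $h_i$ need not lie in $K$, and indeed the analogous ``reverse product of coordinates'' map on $\st_{\mathcal{G}}(1)$ is not a homomorphism, so this route breaks down there.
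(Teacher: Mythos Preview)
The paper does not give its own proof of this lemma; it is quoted verbatim from \cite[Lemmas 4.3 and 4.4]{Alc} and used as a black box. So there is nothing in the present paper to compare your argument against.

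That said, your strategy is sound and is in fact very close to how one would naturally prove the result. The first assertion is exactly as you say: working in $K/K'$, conjugation by $a$ induces the cyclic shift $s$, and the relation $\bar y_0+\cdots+\bar y_{p-1}=0$ kills $1+s+\cdots+s^{p-1}$ on the whole module. For the second assertion your reduction is correct: $\nu$ is a homomorphism, it is invariant under conjugation by $a$, and conjugation by any $y_i$ twists it by $s^{\pm1}$ (since every coordinate of $\psi(y_i)$ lies in $aK$); hence it suffices to check $\nu([y_0,y_k])=0$. One small point: your formula $[y_i,y_j]^{a^{-i}}=[y_0,y_{j-i}]$ uses $\nu(c^a)=\nu(c)$, which you stated, so this step is fine. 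The only place left genuinely open is the explicit verification $\nu([y_0,y_k])=0$. A clean way to organise it is to note that $[y_0,y_k]$ is a conjugate by a power of $a$ of $b_1^{-1}b_k^{-1}b\,b_{k+1}\in\st_{\mathcal G}(1)$, so (using $\nu(c^a)=\nu(c)$) one may compute on the latter element; each coordinate of $\psi(b_1^{-1}b_k^{-1}b\,b_{k+1})$ is a word of length four in $\{a^{\pm1},b^{\pm1}\}$ with total exponent $0$, hence lies in $K$, and summing the classes over the (at most four) nontrivial coordinates gives $0$ in $K/K'$. The degenerate cases $k=1$ and $k=p-1$ (where two of the four special positions coincide) should be written out separately, but they cause no difficulty.
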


We start by determining the structure of the quotient $\mathcal{G}/K'$.
We need the following lemma.

\begin{lem}
The elements $y_0, \dots, y_{p-1}$ have infinite order.
\end{lem}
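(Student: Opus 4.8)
The plan is to show that $y_0$ has infinite order, since the remaining $y_i$ are conjugate to $y_0$ and hence share the same order. Recall that $y_0 = ba^{-1}$, and since $\mathbf{e}=(1,\ldots,1)$, a direct computation gives $\psi(y_0) = \psi(b)\psi(a)^{-1} = (a,\ldots,a,b)(a^{-1},\ldots,a^{-1},a^{-1})$ composed with the correction coming from the permutation underlying $a^{-1}$. Concretely, $a^{-1}$ acts as $\sigma^{-1}$ on the first level, so $\psi(y_0)$ will be a tuple with entries that are powers of $a$ together with exactly one entry involving $b$ (in a position we can track). The key structural feature to extract is that, reading off the first-level decomposition of $y_0^{p}$ (or an easier related power), one coordinate contains a conjugate of $y_0$ times a nontrivial power of $a$, or more usefully $y_0$ itself up to something measurable.

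The cleanest route I would take is to argue by the action on the abelianization of a suitable subgroup, or to use a length/support argument on the tree. First I would compute $\psi(y_0^n)$ for general $n$: since $\psi(y_0)$ has $p-1$ coordinates equal to $a$ and one coordinate (say position $p$, where $b$ sits) equal to $b$, but the permutation part of $y_0$ is $\sigma^{-1}\neq 1$, the automorphism $y_0$ does \emph{not} stabilize the first level. So I would instead pass to $y_0^p$, which lies in $\st(1)$ because $y_0$ induces a $p$-cycle on $L_1$. Then $\psi(y_0^p)$ is a well-defined $p$-tuple, and each coordinate is a product of the section-data of $y_0$ along the cycle: concretely each coordinate of $\psi(y_0^p)$ is (up to ordering) equal to $a^{e_1}\cdots a^{e_{p-1}} b = a^{p-1}b$ — wait, more carefully, one coordinate is $b$ multiplied by $a^{p-1}$ and collects exactly one factor of $b$, while tracking signs. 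The upshot I expect: one coordinate of $\psi(y_0^p)$ equals $b a^{p-1} = b a^{-1} = y_0$ (using $a^{p}=1$), possibly conjugated. If one coordinate of $\psi(y_0^p)$ is conjugate to $y_0$, then $y_0^p \neq 1$ forces $y_0 \neq 1$, but to get \emph{infinite} order I would iterate: if $y_0$ had finite order $p^k$ (the group is a $p$-group locally in the relevant sense — or at least $y_0$ lies in the pro-$p$ Sylow, so its order is a power of $p$ if finite), then $y_0^{p^k}=1$ would force, by descending through the self-similar coordinate, $y_0^{p^{k-1}}=1$, a contradiction by downward induction on $k$.

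Alternatively, and perhaps more robustly, I would use Lemma~\ref{lemma4.3}: for $g = y_0 \in K$ we have $y_0 y_0^a \cdots y_0^{a^{p-1}} = y_{p-1}\cdots y_1 y_0 \in K'$, which is the relation $y_{p-1}\cdots y_0 = 1$ already noted — this says the \emph{sum} of the $y_i$ is trivial mod nothing, so it does not directly bound orders. So I would combine with part (iii) of Lemma~\ref{lemma4.2}: $\psi$ maps $K'$ into $K\times\cdots\times K$ and $K'\times\cdots\times K'$ into $\psi(K')$. The cleanest argument: suppose $y_0^{p^k}=1$ with $k$ minimal and $k\geq 1$. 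Then $z:=y_0^{p^{k-1}}$ has order $p$, $z\in\st(1)$, and $\psi(z)$ is a $p$-tuple each of whose coordinates is a conjugate (by a power of $a$) of $(a^{e_1}\cdots a^{e_{p-1}}b)^{p^{k-1}} = (a^{p-1}b)^{p^{k-1}}$. Since $a^{p-1}b = a^{-1}b \cdot(\text{something})$... I should instead note $a^{p-1}b$ has the same first-level section structure leading back to a power of $y_0$ or of $b$; tracking this gives that some coordinate of $\psi(z)$ is $b^{p^{k-1}}$ or a conjugate of $y_0^{p^{k-1}} = z$, which is nontrivial of order $p$ — but it lies one level down inside a trivial group entry, forcing $z=1$, contradiction.

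\textbf{Main obstacle.} The delicate point is bookkeeping the first-level decomposition of powers of $y_0$: because $y_0\notin\st(1)$, one must work with $y_0^p$ and carefully compute, via the permutational wreath recursion, which coordinate accumulates the single occurrence of $b$ and what power of $a$ accompanies it, keeping exact track of the signs $e_i$ and of conjugation by powers of $a$. Getting a clean statement of the form ``\,some section of $y_0^{p}$ is conjugate to $y_0$ (or to $b$) times an explicit power of $a$\,'' is the crux; once that is established the infinite-order conclusion follows by the standard descending induction on a hypothetical $p$-power order, using that $\mathcal{G}\le\Aut_\sigma T$ is a pro-$p$ setting so any torsion element has $p$-power order. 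I would also double-check the base case, i.e.\ that $y_0\neq 1$, which is immediate since $y_0=ba^{-1}$ and $a,b$ generate a group acting faithfully and nontrivially on $T$.
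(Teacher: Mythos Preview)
Your approach is essentially the paper's: reduce to $y_0$, note that any finite order must be a $p$-power since $\mathcal{G}\le\Aut_\sigma T$, compute $\psi(y_0^p)$, observe that one coordinate is (a conjugate of) $y_0$, and derive a contradiction by descent on the exponent. The ``main obstacle'' you flag is in fact a short explicit calculation: writing $(ba^{-1})^p=b\,b^a\cdots b^{a^{p-1}}$ one finds $\psi(y_0^p)=(y_{p-1},y_{p-2},\ldots,y_1,y_0)$, so the last coordinate is exactly $y_0$ (not merely a conjugate), and the descent $y_0^{p^n}=1\Rightarrow y_0^{p^{n-1}}=1$ is immediate; the detours through Lemmas~\ref{lemma4.2} and~\ref{lemma4.3} and the talk of $z=y_0^{p^{k-1}}$ having sections involving $b^{p^{k-1}}$ are unnecessary and partly confused.
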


\begin{proof}
It suffices to prove the claim for $y_0$.
If the order of $y_0$ is finite, then it must be a power of $p$, say $p^n$, since
$\mathcal{G}$ is contained in a Sylow pro-$p$ subgroup of $\Aut T$.
Now,
\[
y_0^p = (ba^{-1})^p = b b^a \ldots b^{a^{p-1}}\in\st_{\mathcal{G}}(1),
\]
and
\[
\psi(y_0^p)=(aba^{p-2},a^2ba^{p-3},\dots,ba^{p-1})=(y_{p-1},y_{p-2},\dots,y_0).
\]
Thus the last coordinate of $\psi(y_0^{p^n})$ is $y_0^{p^{n-1}}$, which must be $1$.
This is a contradiction.
\end{proof}

\begin{prop}
\label{torsion-free}
The quotient group $\mathcal{G}/K'$ is isomorphic to the semidirect product
$$
P =  \langle d \rangle \ltimes \langle c_0,\dots,c_{p-2}\rangle 
\cong  C_p\ltimes (C_\infty\times\overset{p-1}{\dots}\times C_{\infty}),
$$
where $c_i^d=c_{i+1}$ for $i=0,\dots,p-3$ and $c_{p-2}^d=(c_0\dots c_{p-2})^{-1}$,
and the isomorphism maps $K/K'$ to the kernel of the semidirect product.
In particular, $K/K'$ is torsion-free.
\end{prop}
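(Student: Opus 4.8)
The plan is to realize $\mathcal{G}/K'$ concretely by exhibiting the claimed presentation. First I would record the generators: since $|\mathcal{G}:K|=p$ by Lemma \ref{lemma4.2}(i) and $a\notin K$ (as $a$ induces a nontrivial rooted permutation while $K\le\st_{\mathcal{G}}(1)$... actually $K=\langle y_0\rangle^{\mathcal G}\le\langle b\rangle^{\mathcal G}=\st_{\mathcal G}(1)$), the image $d$ of $a$ in $\mathcal{G}/K'$ has order dividing $p$; and since $a^p=1$ already in $\mathcal{G}$, while $a\notin K\supseteq K'$, we get $|d|=p$. Set $c_i$ to be the image of $y_i$ in $\mathcal{G}/K'$ for $i=0,\dots,p-2$. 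By Lemma \ref{lemma4.2}(ii), $K=\langle y_0,\dots,y_{p-1}\rangle$, and the relation $y_{p-1}y_{p-2}\cdots y_1 y_0=1$ shows that in $K/K'$ (which is abelian) we have $c_{p-1}=(c_0c_1\cdots c_{p-2})^{-1}$, so $K/K'=\langle c_0,\dots,c_{p-2}\rangle$. The conjugation relations $y_i^a=y_{i+1}$ (indices mod $p$) give $c_i^d=c_{i+1}$ for $i=0,\dots,p-3$ and $c_{p-2}^d=c_{p-1}=(c_0\cdots c_{p-2})^{-1}$, which are exactly the defining relations of $P$. Hence there is a surjective homomorphism $P\to\mathcal{G}/K'$ sending $d\mapsto d$, $c_i\mapsto c_i$, carrying the kernel $C:=\langle c_0,\dots,c_{p-2}\rangle$ of the semidirect product onto $K/K'$.

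It remains to prove this surjection is an isomorphism, and the crux is that $K/K'$ is free abelian of rank $p-1$; equivalently, that $C\to K/K'$ is injective. One direction of a lower bound is easy: $K/K'$ is abelian and generated by $p-1$ elements, so it is a quotient of $\Z^{p-1}$, giving $\mathrm{rank}(K/K')\le p-1$. For the reverse, the natural approach is to use the self-similar structure. By Lemma \ref{lemma4.2}(iii), $\psi(K')\supseteq K'\times\cdots\times K'$ and $\psi(K)\le K\times\cdots\times K$, so $\psi$ induces a well-defined homomorphism $\bar\psi\colon K/K'\to (K/K')^{p}$. Using $\psi(y_0^p)=(y_{p-1},y_{p-2},\dots,y_0)$ from the preceding computations, together with analogous computations of $\psi(y_i)$ (note $y_i=y_0^{a^i}$, and $y_i^p=(y_0^p)^{a^i}$), one sees that each standard generator, raised to the $p$th power, maps under $\bar\psi$ to a vector whose coordinates are precisely the full set $c_0,\dots,c_{p-1}$ in some cyclic order. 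In particular, if $w\in K/K'$ had finite (hence $p$-power) order, then $\bar\psi(w^{p^{k}})$ would have coordinates that are $p^{k-1}$-th powers, and iterating forces a nonzero generator to vanish — this is exactly the mechanism of the preceding lemma showing $y_0$ has infinite order, now applied uniformly. The cleanest way to package this: show that the composite $K/K'\xrightarrow{\bar\psi}(K/K')^p\xrightarrow{\text{last coord}}K/K'$ sends $c_0\mapsto c_0$ and $c_i\mapsto c_{i-1}$ (for $i\ge 1$) after composing with a $p$-th power map, or more directly produce, for any putative torsion element, an iterate of $\psi$ whose last coordinate is a nontrivial power of $y_0$, contradicting the lemma.

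I would then assemble the argument: a surjection $\Z^{p-1}\twoheadrightarrow K/K'$ together with an injection of $K/K'$ into a direct power of itself that "expands" in the manner above forces $K/K'$ to be residually a finite abelian $p$-group with no torsion, hence free abelian of rank exactly $p-1$. Consequently $C\to K/K'$ is an isomorphism, and since $\langle d\rangle\to\langle d\rangle$ is too, the five lemma (or a direct order/index count: $|\mathcal{G}/K':K/K'|=p=|P:C|$) shows $P\to\mathcal{G}/K'$ is an isomorphism carrying $C$ onto $K/K'$. In particular $K/K'\cong C\cong\Z^{p-1}$ is torsion-free, which is the final assertion.

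\medskip

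The main obstacle I anticipate is the rank lower bound, i.e.\ ruling out hidden relations and torsion in $K/K'$. The generators-and-relations presentation of $P$ is straightforward to verify, but proving that \emph{no further} relations hold in $\mathcal{G}/K'$ requires genuinely using the recursive structure of $\mathcal{G}$; the slogan "$\psi$ expands torsion away" must be made precise by tracking which $y_i$ (or power thereof) appears in which coordinate of $\psi$ applied to the relevant commutator-free words, and then iterating. This is where the computation of $\psi(y_0^p)$ and the infinite-order lemma already in the text do the real work, extended to handle an arbitrary element of $K/K'$ rather than just $y_0$.
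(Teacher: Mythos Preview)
Your setup (the surjection $P\twoheadrightarrow\mathcal{G}/K'$ and the reduction to showing $K/K'$ is free abelian of rank $p-1$) is fine, but the core of your argument rests on a false claim. You assert that $K\le\st_{\mathcal G}(1)$, justifying it by ``$K=\langle y_0\rangle^{\mathcal G}\le\langle b\rangle^{\mathcal G}=\st_{\mathcal G}(1)$''. This is wrong: $y_0=ba^{-1}$ moves the first level (since $a$ does), so $y_0\notin\st_{\mathcal G}(1)$ and hence $K\not\le\st_{\mathcal G}(1)$. Consequently $\psi$ is not defined on $K$, and your map $\bar\psi\colon K/K'\to(K/K')^p$ does not exist. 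What \emph{is} true is that $K\cap\st_{\mathcal G}(1)=\mathcal G'$, and $\psi$ restricted to $\mathcal G'$ does induce a map $\mathcal G'/K'\to(K/K')^p$ (by Lemma~\ref{lemma4.2}(iii)). But to feed an arbitrary element of $K/K'$ into this you must first raise it to the $p$th power (as you implicitly do when invoking $\psi(y_0^p)$), and multiplication by $p$ kills any $p$-torsion in $K/K'$. So the ``$\psi$ expands torsion away'' mechanism cannot rule out $p$-torsion --- which is exactly the torsion you most need to exclude. Your final paragraph (``an injection of $K/K'$ into a direct power of itself that expands\ldots forces $K/K'$ to be free abelian'') is not an argument as stated, and I do not see how to make it one without already knowing the rank.

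The paper's proof proceeds quite differently. Rather than using the recursion directly, it appeals to known structural results about the congruence quotients: by \cite[Theorem~4.6]{Alc}, each $\mathcal G/K'\st_{\mathcal G}(n)$ is a $p$-group of maximal class of order $p^{n+1}$, and a standard fact about such groups (\cite[Theorem~4.9]{Alc2} or \cite[Corollary~3.3.4]{lee-mck}) then forces $K/K'\st_{\mathcal G}(n)$ to be homocyclic of rank $p-1$ and exponent $p^m$ when $n=m(p-1)$. These finite quotients witness that $(K/K')^m$ needs $p-1$ generators for every $m$, which is incompatible with any nontrivial element in the kernel of $C\to K/K'$. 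This is the input your argument is missing.
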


\begin{proof}
Taking into account that $y_i^a=y_{i+1}$ for all $i$ and that $y_{p-1}\ldots y_1y_0=1$, the assignments $c_i\mapsto y_iK'$ and $d\mapsto aK'$ define a surjective homomorphism $\alpha$ from $P$ to $\mathcal{G}/K'$, by Von Dyck's Theorem.
Thus we only need to show that $\ker\alpha=1$.
By way of contradiction, assume that the kernel of $\alpha$ contains an element
$w\ne 1$.

Put $C=\langle c_0,\ldots,c_{p-2} \rangle$, which is a free abelian group of rank $p-1$.
If $w\in P\smallsetminus C$ then $P=\langle w \rangle C$ and
$\alpha(P)=\alpha(C)=K/K'$, which is a contradiction.
Thus $w\in C$.
If $m$ is the order of the torsion subgroup of $C/\langle w \rangle$ then
$C^m\langle w \rangle/\langle w \rangle$ is free abelian of rank $p-2$.
Since $\alpha(C^m)=(K/K')^m$, it follows that the minimum number of generators of $(K/K')^m$ is $d((K/K')^m)\le p-2$.
Now, by \cite[Theorem 4.6]{Alc}, the quotient $\mathcal{G}/K'\st_{\mathcal{G}}(n)$ is a $p$-group of maximal class of order $p^{n+1}$ for every $n\ge 1$.
Let us choose $n=m(p-1)$.
Then the group $K/K'\st_{\mathcal{G}}(n)$  is homocyclic of rank $p-1$ and exponent $p^m>m$
(see \cite[Theorem 4.9]{Alc2} or \cite[Corollary 3.3.4]{lee-mck}).
Hence $d((K/K'\st_{\mathcal{G}}(n))^m)=p-1$, which is impossible since
$(K/K'\st_{\mathcal{G}}(n))^m$ is a homomorphic image of  $(K/K')^m$.
Thus $\ker\alpha=1$, as desired.
\end{proof}

We can now prove Theorem B.

\begin{thm}
The congruence kernel of the group $\mathcal{G}$ is infinite. 
In particular, $\mathcal{G}$ does not have the congruence subgroup property.
\end{thm}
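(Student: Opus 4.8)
The idea is to play off two facts already at our disposal. By Remark~\ref{closure_in_sylow}, the congruence completion $\overline{\mathcal{G}}$ is a pro-$p$ group, and hence so is every continuous quotient of it. On the other hand, Proposition~\ref{torsion-free} gives a surjection $\mathcal{G}\twoheadrightarrow\mathcal{G}/K'\cong P$ onto a group which contains a copy of $\Z^{p-1}$; such a group has finite quotients that are as far as possible from being $p$-groups, and these quotients are invisible in $\overline{\mathcal{G}}$. The plan is to assemble them, for a single auxiliary prime, into an infinite subgroup of the congruence kernel.

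Fix a prime $q\ne p$. The conjugation action of $d$ on the free abelian group $\langle c_0,\dots,c_{p-2}\rangle\cong\Z^{p-1}$ described in Proposition~\ref{torsion-free} is given by an integral matrix, so it descends to an action on $(\Z/q^k\Z)^{p-1}$ for every $k\ge1$. Composing $\mathcal{G}\twoheadrightarrow\mathcal{G}/K'\cong P$ with the corresponding reduction maps yields a compatible family of finite quotients $\mathcal{G}\twoheadrightarrow P_k:=C_p\ltimes(\Z/q^k\Z)^{p-1}$, and passing to the inverse limit a continuous surjection
\[
\phi\colon \widehat{\mathcal{G}}\longrightarrow W:=\varprojlim_k P_k\cong C_p\ltimes\Z_q^{\,p-1}.
\]

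Now let $C=\ker(\widehat{\mathcal{G}}\to\overline{\mathcal{G}})$ be the congruence kernel. As $C$ is compact and normal, $\phi(C)$ is a closed normal subgroup of $W$, and $\phi$ induces a continuous surjection $\overline{\mathcal{G}}=\widehat{\mathcal{G}}/C\twoheadrightarrow W/\phi(C)$; since $\overline{\mathcal{G}}$ is pro-$p$, so is $W/\phi(C)$. But the normal subgroup $N=\Z_q^{\,p-1}$ of $W$ is pro-$q$, and the image of a pro-$q$ group under a continuous homomorphism into a pro-$p$ group is trivial (such an image is closed, hence both pro-$p$ and pro-$q$, hence trivial). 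Therefore $W\to W/\phi(C)$ annihilates $N$, i.e.\ $N\le\phi(C)$; in particular $\phi(C)$ is infinite, and so is $C$. This is Theorem~B, and a fortiori $\mathcal{G}$ does not have the congruence subgroup property.

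The only computational point is the harmless verification that the $\langle d\rangle$-module structure on $\Z^{p-1}$ reduces mod $q^k$ and that the resulting quotients form an inverse system with limit $C_p\ltimes\Z_q^{\,p-1}$; the rest is soft. I do not expect a real obstacle: the substantive input is Proposition~\ref{torsion-free}, where the torsion-freeness of $K/K'$ is exactly what supplies an infinite, ``essentially non-$p$'' quotient of $\mathcal{G}$ to contrast with the pro-$p$ congruence completion. (Letting $q$ range over all primes different from $p$ simultaneously, the same argument shows that the congruence kernel maps onto $\prod_{q\ne p}\Z_q^{\,p-1}$.)
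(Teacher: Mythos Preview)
Your argument is correct and rests on exactly the same two ingredients as the paper's: Remark~\ref{closure_in_sylow} (the congruence completion is pro-$p$) and Proposition~\ref{torsion-free} (the quotient $\mathcal{G}/K'$ contains $\Z^{p-1}$, hence has arbitrarily large $q$-power quotients for $q\ne p$). The difference is in the packaging. The paper works one level at a time: it takes the finite-index subgroups $K_n$ with $K_n/K'=(K/K')^{q^n}$, uses the index-preserving correspondence between open subgroups of $\mathcal{G}$ and of $\widehat{\mathcal{G}}$, and counts indices to show $q^{n(p-1)}$ divides $|C:U_n\cap C|$ for every $n$. You instead assemble all these quotients at once into a single continuous surjection $\widehat{\mathcal{G}}\twoheadrightarrow C_p\ltimes\Z_q^{p-1}$ and argue that the pro-$q$ normal subgroup $\Z_q^{p-1}$ must die in any pro-$p$ quotient, hence lies in $\phi(C)$. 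Your version is slightly more conceptual and immediately exhibits an explicit infinite quotient of $C$ (indeed, as you note, a copy of $\prod_{q\ne p}\Z_q^{p-1}$); the paper's version is a bit more elementary and avoids appealing to facts about images of pro-$q$ groups in pro-$p$ groups. Either way, the substance is the same.
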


\begin{proof}
Let $\widehat{\mathcal{G}}$ and $\overline{\mathcal{G}}$ be the profinite and congruence completions of $\mathcal{G}$, respectively, and let $C$ be the congruence kernel of $\mathcal{G}$, i.e.\ the kernel of the natural homomorphism from
$\widehat{\mathcal{G}}$ onto $\overline{\mathcal{G}}$. Recall from Remark \ref{closure_in_sylow} that $\overline{\mathcal{G}}$ is a pro-$p$ group.

Consider a prime $q$ other than $p$.
By Proposition \ref{torsion-free}, the factor group $\mathcal{G}/K'$ is a semidirect product with kernel $K/K'$ isomorphic to $C_\infty\times\overset{p-1}{\dots}\times C_{\infty}$ and complement isomorphic to $C_p$.
For every $n\in\N$, let $K_n$ be the normal subgroup of $\mathcal{G}$ defined by the condition $K_n/K'=(K/K')^{q^n}$.
Then $|\mathcal{G}:K_n|=pq^{n(p-1)}$.

A basic result in profinite group theory (see \cite[Proposition 3.2.2]{RibesZalesskii}) states that
there is a one-to-one correspondence $\Phi$ between the subgroups of $\mathcal{G}$ which are open in the profinite topology of $\mathcal{G}$ and the open subgroups of $\widehat{\mathcal{G}}$.
The map $\Phi$ takes an open subgroup $H\leq\mathcal{G}$ to the closure of $H$ in $\widehat{\mathcal{G}}$ 
(having identified $\mathcal{G}$ with its image in $\widehat{\mathcal{G}}$). 
Moreover, $\Phi$ preserves the indices between subgroups.
Thus, if $U_n=\Phi(K_n)$ then
\begin{equation}
\label{eqn}
pq^{n(p-1)}=|\mathcal{G}: K_n|=|\widehat{\mathcal{G}}: U_n|=|\widehat{\mathcal{G}}: U_nC| \, |U_nC:U_n|. 
\end{equation}
Now, $\widehat{\mathcal{G}}/ U_nC$ is a finite quotient of
\[
\widehat{\mathcal{G}}/C \cong \overline{\mathcal{G}} \cong \varprojlim_{n\in\N} \, \mathcal{G}/\st_{\mathcal{G}}(n),
\]
which is a pro-$p$ group.
Consequently $|\widehat{\mathcal{G}}: U_nC|$ is a power of $p$, and then by (\ref{eqn}),
\[
q^{n(p-1)}\mid |U_nC:U_n| = |C:U_n\cap C|
\]
for all $n\in\N$.
We conclude that $C$ is infinite, as desired.
\end{proof}

It is worth mentioning that the congruence kernel (and consequently also the congruence subgroup property) is independent of the branch action \cite[Theorem 1]{AleCSP} and indeed even of the weakly branch action \cite[Theorem 6.5]{Aletesis} that a group may have on a rooted, level-homogeneous tree.
In particular, the congruence kernel of $\mathcal{G}$ is also infinite for any other weakly branch action of $\mathcal{G}$.

Our next purpose is to prove Theorem C, i.e.\ that $\mathcal{G}$ is not a branch group.
This means that the techniques developed so far (in \cite{bartholdiCSP}) for the calculation of the congruence kernel of a subgroup of $\Aut T$ are not available in this case.
We need the following easy lemma.

\begin{lem}
\label{rist finite index}
Let $G$ be a subgroup of $\Aut T$, and assume that $|G:\rst_G(n)|$ is finite for some $n$.
If $H$ is a finite index subgroup of $G$, then $|H:\rst_H(n)|$ is also finite.
\end{lem}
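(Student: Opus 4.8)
The plan is to exploit the fact that $\rst_G(n)$ is a product of the rigid vertex stabilizers $\rst_G(v)$ over $v\in L_n$, and that rigid vertex stabilizers behave well under intersection with a finite index subgroup. First I would note that it suffices to bound $|\rst_G(n):\rst_H(n)|$, since $|H:\rst_H(n)| \le |G:\rst_H(n)| = |G:\rst_G(n)|\,|\rst_G(n):\rst_H(n)|$ and the first factor on the right is finite by hypothesis. Next I would observe the elementary containment $\rst_G(v)\cap H \subseteq \rst_H(v)$ for every vertex $v\in L_n$: indeed an element of $\rst_G(v)$ lies in $\st(n)$ and acts trivially on every $T_w$ with $w\in L_n$, $w\ne v$, so if it also lies in $H$ it meets the defining conditions for $\rst_H(v)$. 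Hence $\bigl(\rst_G(v_1)\times\cdots\times\rst_G(v_{p^n})\bigr)\cap H$, where $L_n=\{v_1,\dots,v_{p^n}\}$, is contained in $\rst_H(v_1)\cdots\rst_H(v_{p^n}) = \rst_H(n)$.

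The key step is then the index count: since $H$ has finite index in $G$, its intersection with the subgroup $\rst_G(n)$ has finite index in $\rst_G(n)$, say $|\rst_G(n):\rst_G(n)\cap H| = k < \infty$. Combining this with the containment just established, $\rst_G(n)\cap H \subseteq \rst_H(n) \subseteq \rst_G(n)$, we get $|\rst_G(n):\rst_H(n)| \le k < \infty$. Feeding this back into the inequality from the first step yields $|H:\rst_H(n)|<\infty$, which is exactly what is claimed.

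I do not anticipate a genuine obstacle here; the only point requiring a moment's care is the containment $\rst_G(v)\cap H\subseteq\rst_H(v)$, which must be checked directly from the definition of rigid vertex stabilizer (an element of $\st_H(n)$ whose image under $\psi_n$ is supported only at $v$), rather than assumed. Everything else is bookkeeping with finite indices, using only that index is submultiplicative across subgroup chains and that intersecting with a finite index subgroup cannot increase index.
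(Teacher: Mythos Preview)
Your reduction to bounding $|\rst_G(n):\rst_H(n)|$ is fine, and so is the observation that $\rst_G(v)\cap H\subseteq\rst_H(v)$ (in fact this is an equality, directly from the definition). The gap is in the step marked ``Hence'': from $\rst_G(v)\cap H\subseteq\rst_H(v)$ for each $v$ you cannot conclude that
\[
\Bigl(\prod_{v\in L_n}\rst_G(v)\Bigr)\cap H \;\subseteq\; \prod_{v\in L_n}\rst_H(v).
\]
An element $g=\prod_v g_v\in\rst_G(n)\cap H$ need not have each factor $g_v$ in $H$; only the product is assumed to lie there. (Think of $A_1=\Z\times 0$, $A_2=0\times\Z$ and the diagonal $H$ in $\Z^2$: each $A_i\cap H=0$, yet $(A_1A_2)\cap H=H$.) In general only the reverse containment $\rst_H(n)\subseteq\rst_G(n)\cap H$ holds, so your chain $\rst_G(n)\cap H\subseteq\rst_H(n)\subseteq\rst_G(n)$ is backwards and the bound $|\rst_G(n):\rst_H(n)|\le k$ does not follow.

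The fix is to compute the index factor by factor, using that both $\rst_G(n)$ and $\rst_H(n)$ are internal direct products indexed by $L_n$. Since $\rst_H(v)=\rst_G(v)\cap H$, one has
\[
|\rst_G(n):\rst_H(n)|
=\prod_{v\in L_n}|\rst_G(v):\rst_G(v)\cap H|
\le |G:H|^{|L_n|}<\infty,
\]
which is exactly the paper's argument. Your outline becomes correct once you replace the erroneous global containment with this product computation.
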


\begin{proof}
Let $m$ be the index of $H$ in $G$.
Then
\begin{align*}
|\rst_G(n):\rst_H(n)|
&=
\Big | \prod_{u\in L_n} \rst_G(u):\prod_{u\in L_n} \rst_H(u) \Big |
\\
&=
\prod_{u\in L_n} \, |\rst_G(u):\rst_G(u)\cap H|
\le
m^{|L_n|}
\end{align*}
is finite, and the result follows.
\end{proof}

\begin{thm}
The group $\mathcal{G}$ is not a branch group.
\end{thm}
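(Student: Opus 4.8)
The plan is to show that $\mathcal{G}$ cannot be branch by exhibiting a finite index subgroup of $\mathcal{G}$ (or of a subgroup it contains) whose rigid stabilizer of some level is trivial, and then invoking Lemma \ref{rist finite index} to derive a contradiction. The natural candidate for the witnessing subgroup is $K'$, or more precisely some member of the family $K_n$ from the proof of Theorem B, since Proposition \ref{torsion-free} gives us very precise control over $K/K'$: it is free abelian of rank $p-1$, so it has plenty of characteristic finite index subgroups with prescribed behaviour. In fact, the key structural obstruction should be that $\psi(K')$, while it \emph{contains} $K'\times\cdots\times K'$ (Lemma \ref{lemma4.2}(iii)), is genuinely bigger than this direct product, and the "diagonal" constraint expressed in Lemma \ref{lemma4.3} (namely $h_p\cdots h_1\in K'$ for $h\in K'$) is what forces rigid stabilizers to be too small.

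The concrete steps I would carry out are as follows. First, suppose for contradiction that $\mathcal{G}$ is a branch group, so $|\mathcal{G}:\rst_{\mathcal{G}}(n)|<\infty$ for all $n$. Since $K'$ has finite index in $K$ which has index $p$ in $\mathcal{G}$ (Lemma \ref{lemma4.2}(i)), a suitable finite index subgroup $H$ lying between $K'$ and $\mathcal{G}$ — say $H=K_m$ for well-chosen $m$, or $H=K'$ itself if that already works — also has $|H:\rst_H(n)|<\infty$ for all $n$ by Lemma \ref{rist finite index}. Second, I would analyse $\rst_H(1)$: an element of $\rst_H(1)$ is an element $h$ of $H\cap\st_{\mathcal{G}}(1)$ with $\psi(h)=(h_1,1,\dots,1)$. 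Using $\psi(\mathcal{G}')\le K\times\cdots\times K$ and the diagonal relation from Lemma \ref{lemma4.3}, together with the fact that $\psi_1$ restricted to $\st_{\mathcal{G}}(1)$ is injective, I expect to show that $\rst_H(1)$ projects into a \emph{proper} subgroup of the first coordinate — concretely, that the first coordinate $h_1$ must satisfy a congruence condition (land in $K'$, or in some subgroup forced by the vanishing of the other coordinates combined with the product relation), so that $\rst_H(1)$ is "thinner" than $K$ by an infinite amount. Third, iterating this down the tree — each level forces another independent infinite-index condition — I would show $|H:\rst_H(n)|\to\infty$, in fact is already infinite for some fixed small $n$ (perhaps $n=1$ or $n=2$), contradicting the branch hypothesis.

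The cleanest incarnation, which I would pursue first, is to localise everything at a prime $q\neq p$ exactly as in the proof of Theorem B: take $K_1$ with $K_1/K'=(K/K')^q$, so $K_1$ has index $pq^{p-1}$ in $\mathcal{G}$. Because $\overline{\mathcal{G}}$ is pro-$p$, the "branch" constraints coming from level stabilizers are $p$-constraints, whereas the subgroup $K_1$ carries a genuine $q$-part in $K/K'$; the rigid stabilizers $\rst_{K_1}(n)$, being built from $\psi_n$-images that are controlled by $K'$ and its subgroups, cannot see this $q$-part in the right quantity, so $|K_1:\rst_{K_1}(n)|$ must contain unboundedly large powers of $q$. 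This is essentially the same mechanism that made the congruence kernel infinite, repackaged to contradict the finite index of rigid stabilizers.

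The main obstacle will be the second step: pinning down exactly what $\rst_H(1)$ (and then $\rst_H(n)$) is, i.e.\ proving the precise gap between $\psi(K')$ and $K'\times\cdots\times K'$ and showing this gap is "infinite" in the index sense rather than merely a finite-$p$-group discrepancy. For a general regular branch group the inclusion $K'\times\cdots\times K'\le\psi(K')$ being proper is harmless, but here I need to extract from the specific relation $h_p\cdots h_1\in K'$ in Lemma \ref{lemma4.3} — and from the structure of $\mathcal{G}/K'$ in Proposition \ref{torsion-free} — that an element with $p-1$ trivial coordinates is forced to have its remaining coordinate in $K'$ (or in some other infinite-index subgroup of $K$), which is what breaks the finite-index property. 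Once that local computation is done, the descent through the levels and the final contradiction with Lemma \ref{rist finite index} should be routine.
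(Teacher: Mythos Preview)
Your plan has the right shape and correctly isolates the crucial computation, but there is a slip that matters: you write that ``$K'$ has finite index in $K$'', yet Proposition~\ref{torsion-free} says $K/K'\cong\Z^{p-1}$ is infinite. So $K'$ is \emph{not} of finite index in $\mathcal{G}$, and Lemma~\ref{rist finite index} cannot be applied with $H=K'$. This is not a disaster---in fact the infinite index of $K'$ is precisely what powers the contradiction---but it means the finite-index subgroup $H$ must be chosen differently, and the $q$-prime detour via $K_m$ becomes unnecessary once you see this.

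The paper simply takes $H=\mathcal{G}'$, which has index $p^2$ by Proposition~\ref{multi_lemmas}(iii), and proves that $\rst_{\mathcal{G}'}(1)=\psi^{-1}(K'\times\cdots\times K')$; in particular $\rst_{\mathcal{G}'}(1)\le K'$. Since $\mathcal{G}/K'$ is infinite, so is $|\mathcal{G}':\rst_{\mathcal{G}'}(1)|$, contradicting Lemma~\ref{rist finite index}. The ``main obstacle'' you flagged is exactly the local computation the paper does, and it is short: if $g\in\rst_{\mathcal{G}'}(x)$ with $\psi(g)=(1,\ldots,h,\ldots,1)$, then $h\in K$ by Lemma~\ref{lemma4.2}(iii); forming $g^*=gg^a\cdots g^{a^{p-1}}$ gives $g^*\in K'$ by the first part of Lemma~\ref{lemma4.3} and $\psi(g^*)=(h,\ldots,h)$, so the second part yields $h^p\in K'$. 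Torsion-freeness of $K/K'$ then forces $h\in K'$.

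So your intuition about the mechanism (the diagonal constraint of Lemma~\ref{lemma4.3} plus the structure of $K/K'$) is exactly right. But the contradiction already appears at level~$1$ with the single subgroup $\mathcal{G}'$; no iteration down the tree and no auxiliary prime $q$ are needed.
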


\begin{proof}
Let $L=\psi^{-1}(K'\times \cdots \times K')$.
By Lemma \ref{lemma4.2}, we have $L\subseteq \rst_{\mathcal{G}'}(1)$.
We claim that the equality holds.
To that purpose, we consider an element $g\in\rst_{\mathcal{G}'}(x)$, with $x\in X$, and we prove that $g\in L$.
By definition of rigid stabilizer of a vertex, all coordinates of $\psi(g)$ are equal to $1$,
except possibly the one corresponding to position $x$, say, $h$.
Observe that $h\in K$, since $\psi(\mathcal{G}')\subseteq K\times \cdots \times K$ by Lemma \ref{lemma4.2}.
If
\[
g^* = g g^a \ldots g^{a^{p-1}},
\]
then $g^* \in K'$ by Lemma \ref{lemma4.3}.
Now $\psi(g^*)=(h,\ldots,h)$ and, by applying the second part of Lemma \ref{lemma4.3}, we get $h^p\in K'$.
Since $h\in K$ and $K/K'$ is torsion-free by Proposition \ref{torsion-free}, it follows that $h\in K'$.
Thus $\psi(g)\in K'\times \cdots \times K'$, and $g\in L$, as desired.

Now assume by way of contradiction that $\mathcal{G}$ is a branch group.
Then $|\mathcal{G}:\rst_{\mathcal{G}}(1)|$ is finite, and by Lemma \ref{rist finite index} and the previous paragraph, $|\mathcal{G}':L|$ is also finite.
Now observe that $L\le K'$ by Lemma \ref{lemma4.2}.
Therefore the factor group $\mathcal{G}/K'$ is finite, which is a contradiction, according to
Proposition \ref{torsion-free}.
\end{proof}

\section{Barnea's questions}
\label{sec:barnea}

In \cite{Barnea}, Barnea posed the following two questions:
\begin{enumerate}
\item
Is there an infinite finitely generated residually finite non-torsion group such that its profinite completion is pro-$p$?
\item
Is there an infinite finitely generated residually finite torsion-free group such that its profinite completion is pro-$p$?
\end{enumerate}

According to Theorem A, the profinite completion of a GGS-group $G$ with non-constant defining vector is the same as its congruence completion.
Since $G$ lies in a Sylow pro-$p$ subgroup of $\Aut T$, the index $|G:\st_G(n)|$ is a
power of $p$ for all $n\ge 1$.
Thus the profinite completion of $G$ is a pro-$p$ group.
By considering non-constant vectors $\mathbf e$ with $e_1+\cdots+e_{p-1}\ne 0$, we get groups which answer in the positive Barnea's first question.
Note that the congruence subgroup property is hereditary for finite index subgroups.
Thus, in order to answer the second question, we consider the GGS-group with defining vector $\mathbf{e}=(1,\ldots,1,\lambda)$ with $\lambda\in\F_p\smallsetminus\{1,2\}$ and show that it is virtually torsion-free.
In the case $p=3$ and $\lambda=0$, this GGS-group is known as the Fabrykowski-Gupta group, and it was shown to be virtually torsion-free in \cite[Theorem 6.4]{Bartholdi_Parabolic}.

\vspace{8pt}

To start with, we identify which finite index subgroup should be shown to be torsion-free, using the following criterion.
% 
% 
% 
% To start with, we identify a possible torsion-free subgroup of finite index. 
% In fact, we have the following criterion  which we will use to determine when a regular branch group is virtually torsion-free.
% ............................... por que nos hace falta este criterio

\begin{prop}
 Let $G$ be a regular branch group over a subgroup $K$ and suppose that $G$ has the congruence subgroup property.
 If $\mathcal{P}$ is a property of groups which is hereditary for subgroups then
 $G$  virtually has $\mathcal{P}$ if and only if $K$ has $\mathcal{P}$.
\end{prop}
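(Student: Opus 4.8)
The plan is to prove the two implications separately, with the branch structure providing the bridge between $G$ and $K$.

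\emph{The easy direction.} Suppose $K$ has $\mathcal{P}$. Since $G$ is regular branch over $K$, the subgroup $K$ has finite index in $G$, so $G$ virtually has $\mathcal{P}$ — literally $K$ itself is the required finite-index subgroup. (Here we only use that $\mathcal{P}$ is inherited by $K$ from $K$, which is trivial; no hereditarity hypothesis is even needed for this direction.)

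\emph{The hard direction.} Suppose $G$ has a finite-index subgroup $H$ with property $\mathcal{P}$; we must show $K$ has $\mathcal{P}$. Since $\mathcal{P}$ is hereditary, we may pass to $H \cap \st_G(m)$ and assume $H \le \st_G(m)$ for some $m$; moreover, by the congruence subgroup property, every finite-index subgroup of $G$ contains some level stabilizer, so we may even assume $H = \st_G(m)$ for suitable $m$ — enlarging $H$ is allowed in the sense that it suffices to show $\st_G(m)$ has $\mathcal{P}$ for \emph{some} $m$, and then $K$ will have $\mathcal{P}$ provided $K$ embeds into $\st_G(m)$. Wait — more carefully: the CSP gives $\st_G(n) \le H$ for some $n$, and $\st_G(n)$ has $\mathcal{P}$ by hereditarity. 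So it suffices to embed $K$ (up to isomorphism) into $\st_G(n)$ for every, or at least arbitrarily large, $n$. This is exactly what the regular branch structure provides: since $K \times \overset{p}{\cdots} \times K \subseteq \psi(K) \subseteq \psi(\st_G(1))$, iterating gives $K \times \overset{p^n}{\cdots} \times K \subseteq \psi_n(\st_G(n))$, so in particular $\psi_n^{-1}$ of the subgroup $K \times 1 \times \cdots \times 1$ is a subgroup of $\st_G(n)$ isomorphic to $K$. Thus $K$ embeds in $\st_G(n)$ for all $n\ge 1$. Choosing $n$ with $\st_G(n) \le H$, we get $K \hookrightarrow H$, and since $\mathcal{P}$ is hereditary and $H$ has $\mathcal{P}$, so does $K$.

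\emph{Main obstacle.} The only subtlety is bookkeeping around the order of quantifiers: one must combine "$\mathcal{P}$ passes down to level stabilizers of $H$" with "$K$ sits inside every level stabilizer of $G$" to land $K$ inside a subgroup known to have $\mathcal{P}$. Both ingredients are immediate — the first from hereditarity plus CSP, the second from the defining inclusion $K^p \subseteq \psi(K)$ of a regular branch group together with fractality (which guarantees $\st_G(n)$ surjects onto $G^{p^n}$ and hence the relevant pullback is well-defined). No genuine difficulty arises; the proposition is essentially a formal consequence of the definitions once CSP is available to translate finite-index subgroups into level stabilizers.
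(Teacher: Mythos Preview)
Your argument is correct and follows essentially the same route as the paper: use the congruence subgroup property to find $\st_G(n)\le H$ with $\mathcal{P}$, then use the regular branch inclusion $K\times\overset{p^n}{\cdots}\times K\subseteq\psi_n(\st_G(n))$ to embed $K$ in a group with $\mathcal{P}$. The paper phrases the last step via $\rst_G(n)$ rather than pulling back $K\times 1\times\cdots\times 1$, but the content is the same; your remark about fractality is unnecessary, since only injectivity of $\psi_n$ is used.
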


\begin{proof}
 Since $K$ has finite index in $G$, the `if' direction is clear. 
 To show the `only if' part, suppose that $G$  virtually has $\mathcal{P}$ and has the congruence subgroup property. 
 Thus there exists some $n$ such that $\st_G(n)$ has $\mathcal{P}$ and therefore $\rst_G(n)$ has $\mathcal{P}$. 
 Since $G$ is regular branch over $K$, we have $\psi_n(\rst_G(n))\geq K\times\cdots\times K$ and therefore $K$ must have $\mathcal{P}$.
\end{proof}

% \begin{prop}
% Let $G$ be a GGS-group with non-constant defining vector $\mathbf{e}$.
% Then:
% \begin{enumerate}
% \item
% If $\mathbf{e}$ is non-symmetric, then $G$ is virtually torsion-free if and only if $G'$ is torsion-free.
% \item
% If $\mathbf{e}$ is symmetric, then $G$ is virtually torsion-free if and only if $\gamma_3(G)$ is torsion-free.
% \end{enumerate}
% \end{prop}
% 
% 
% 
% \begin{proof}
% We only prove (i), since (ii) is completely similar.
% Obviously, it suffices to prove the `only if' part of the statement.
% So let us assume that $G$ is virtually torsion-free.
% Since $G$ satisfies the congruence subgroup property, $\st_G(n)$ is torsion-free for some $n\ge 2$.
% Let us choose $n$ as small as possible.
% If $n\ge 3$ then, by Lemma 3.3 of \cite{Alc}, we have
% \[
% \psi(\st_G(n))
% =
% \st_G(n-1) \times \overset{p}{\cdots} \times \st_G(n-1).
% \]
% Consequently $\st_G(n-1)$ is also torsion-free, which is contrary to the choice of $n$.
% Thus we have $n=2$, and then since $\st_G(1)'\le \st_G(2)$ and
% \[
% \psi(\st_G(1)')=G'\times\overset{p}{\cdots}\times G'
% \]
% by Proposition \ref{branch}, we conclude that $G'$ is torsion-free.
% \end{proof}

As a consequence, a natural strategy in order to answer Barnea's second question in the affirmative is to consider a GGS-group $G$ with non-symmetric defining vector and examine whether $G'$ is torsion-free.
We will show that this is the case, for instance, for the group with defining vector $\mathbf{e}=(1,\ldots,1,\lambda)$ with $\lambda\in\F_p\smallsetminus\{1,2\}$, for every odd prime $p$, although the proof is valid for other vectors too. 

% In particular, to find examples answering Barnea's second question, it will suffice to show that for certain GGS-groups $G$ with non-symmetric defining vector, $G'$ is torsion-free.
% We will show this for the groups with defining vector $\mathbf{e}=(1,\ldots,1,0)$ but the proof is valid for other vectors too as explained below. %(for example (2,1,0,0) )
%
% Thus we will prove that the GGS-group with defining vector $\mathbf{e}=(1,\ldots,1,0)$ has torsion-free derived subgroup.

We need the following two lemmas.
In the remainder, we write $b_i$ for the conjugate $b^{a^i}$ for all $i\in\Z$; observe that $b_i=b_j$ if $i\equiv j\pmod p$. Also, we have $\st_G(1)=\langle b_1,\ldots,b_p \rangle$.

\begin{lem}
Let $G$ be a GGS-group and let $h\in\st_G(1)$.
Then the following conditions are equivalent:
\begin{enumerate}
\item
$h\in G'$.
\item
If $\psi(h)=(h_1,\dots, h_p)$, then $h_1\dots h_p\in G'$.
\item
$\psi((ah)^p)\in G'\times \cdots \times G'$.
\end{enumerate}
\end{lem}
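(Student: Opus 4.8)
The plan is to reduce both equivalences to computations in the abelianisation $G/G'$. The essential point is that, although the multiplication map $G\times\overset{p}{\cdots}\times G\to G$ is not a homomorphism, its composite with the canonical projection $G\to G/G'$ is; hence the assignment $h\mapsto\overline{h_1\cdots h_p}$, where $\psi(h)=(h_1,\dots,h_p)$, defines a homomorphism $\pi\colon\st_G(1)\to G/G'$. Conditions~(i) and~(ii) then become the statements that two explicit homomorphisms $\st_G(1)\to G/G'$ vanish at $h$, and I would prove that these two homomorphisms have the same kernel.

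I would begin by recording the structure of $G/G'$. By Proposition~\ref{multi_lemmas} we have $|G:G'|=p^2$ and $G'\le\st_G(1)$, so $a\notin G'$, and since $a^p=1$ the image $\overline a$ has order $p$. Moreover $a$ acts on the first level as a $p$-cycle while $b$ fixes it, so $|G:\st_G(1)|=p$, and by Proposition~\ref{multi_lemmas}(i) the quotient $\st_G(1)/G'=\langle\overline b\rangle$ has order $p$; it does not contain $\overline a$ because $a\notin\st_G(1)$. Hence $G/G'=\langle\overline a\rangle\times\langle\overline b\rangle\cong C_p\times C_p$. Writing $\beta\colon G\to\F_p$ for the $\overline b$-coordinate homomorphism, we have $\overline h\in\langle\overline b\rangle$ for every $h\in\st_G(1)$, and therefore
\[
h\in G'\iff\beta(h)=0.
\]

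For (i)$\Leftrightarrow$(ii) I would evaluate $\pi$ on the generators $b^{a^i}$ ($0\le i\le p-1$) of $\st_G(1)$ provided by Proposition~\ref{multi_lemmas}(i). Since $\psi(b^{a^i})$ is a cyclic permutation of $(a^{e_1},\dots,a^{e_{p-1}},b)$ and $G/G'$ is abelian, one gets $\pi(b^{a^i})=(e_1+\cdots+e_{p-1})\overline a+\overline b=:v$ for every $i$, and $v\ne 0$ because its $\overline b$-coordinate equals $1$. Now $\pi$ and the homomorphism $h\mapsto\beta(h)v$ agree on a generating set of $\st_G(1)$, hence coincide; as $v\ne 0$ this gives $\pi(h)=0\iff\beta(h)=0\iff h\in G'$. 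Since $\pi(h)=0$ is precisely condition~(ii), the equivalence of (i) and (ii) follows. It remains to prove (ii)$\Leftrightarrow$(iii): as $h\in\st_G(1)$, the automorphism $ah$ induces the $p$-cycle $\sigma$ on the first level, so $(ah)^p\in\st_G(1)$, and an explicit computation of this iterate shows that the coordinate of $\psi((ah)^p)$ at a vertex $x$ is the product of $h_1,\dots,h_p$, each occurring exactly once, taken in the cyclic order of the $\sigma$-orbit of $x$. Thus every coordinate of $\psi((ah)^p)$ is a cyclic rearrangement of $h_1\cdots h_p$, hence congruent to $h_1\cdots h_p$ modulo $G'$, so $\psi((ah)^p)\in G'\times\overset{p}{\cdots}\times G'$ if and only if $h_1\cdots h_p\in G'$, which is~(ii).

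The only delicate point is the bookkeeping in this last step: one must pin down the precise cyclic-shift pattern of $\psi((ah)^p)$ for the chosen wreath-product convention. However, only the residue modulo $G'$ is needed, and cyclic rearrangements of a word are conjugate, so the argument is insensitive to the convention; I therefore do not expect a genuine obstacle, the remaining verifications being routine.
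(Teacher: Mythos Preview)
Your proposal is correct and follows essentially the same approach as the paper: both define the homomorphism $\st_G(1)\to G/G'$ sending $h$ to $\overline{h_1\cdots h_p}$, evaluate it on the generators $b^{a^i}$ to get a single nonzero value, and deduce (i)$\Leftrightarrow$(ii); the argument for (ii)$\Leftrightarrow$(iii) via the explicit expansion $(ah)^p=h^{a^{p-1}}\cdots h^a h$ is identical. The only cosmetic difference is that you extract the criterion $h\in G'\iff\beta(h)=0$ directly from the structure $G/G'=\langle\overline a\rangle\times\langle\overline b\rangle$ afforded by Proposition~\ref{multi_lemmas}, whereas the paper cites an external result (\cite[Theorem~2.11]{Alc}) for the equivalent exponent-sum characterisation; your version is thus slightly more self-contained, but the underlying idea is the same.
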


\begin{proof}
Let $\Phi: \st_G(1)\longrightarrow G/G'$ be the homomorphism given by $\Phi(h)=h_1\dots h_pG'$, where
$\psi(h)=(h_1,\dots,h_p)$.
Clearly, we have $\Phi(h^a)=\Phi(h)$ for all $h\in\st_G(1)$, and then $\Phi(b_i)=\Phi(b)$ for all $i\in\Z$.
If we write $h\in\st_G(1)$ in the form $h=b_{i_1}^{r_1}\dots b_{i_k}^{r_k}$, with $r_1,\ldots,r_k\in\Z$, it follows that
$\Phi(h)=\Phi(b)^{r_1+\cdots+r_k}$.
Since $G/G'$ is elementary abelian and $\Phi(b)$ is non-trivial, we have $h_1\ldots h_p\in G'$ if and only if
$r_1+\dots+r_k=0$ in $\F_p$.
Now, by Theorem 2.11 in \cite{Alc}, the latter condition is equivalent to $h\in G'$.
This proves that (i) and (ii) are equivalent.

Now we prove the equivalence between (ii) and (iii).
Since
\[
(ah)^p = h^{a^{p-1}} h^{a^{p-2}} \ldots h^ah,
\]
the $i$th component of $\psi((ah)^p)$ is $ h_{i+1} \ldots h_{i+p-1} h_i$, where the indices are to be reduced modulo $p$ to the interval $[1,p]$, and the result follows.
\end{proof}

\begin{lem}
\label{lem:order p}
Let $G$ be a GGS-group and let $g\in G$ be such that $g^p=1$.
Then $g\in \langle a \rangle G'\cup \langle b \rangle G' \cup G'$.
\end{lem}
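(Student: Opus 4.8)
The plan is to argue entirely modulo the commutator subgroup $G'$. Recall from (iii) of Proposition~\ref{multi_lemmas} that $|G:G'|=p^2$, and, as established in (the proof of) the preceding lemma, that $G/G'$ is elementary abelian with $bG'$ non-trivial. Since $a\notin\st_G(1)\supseteq G'$ by (ii) of Proposition~\ref{multi_lemmas}, while $\langle b\rangle G'\le\st_G(1)$, the coset $aG'$ does not lie in $\langle bG'\rangle$; as $G/G'\cong C_p\times C_p$, it follows that $aG'$ and $bG'$ form a basis of $G/G'$. So I may write $gG'=a^ib^jG'$ with $i,j\in\F_p$, and the assertion to prove is simply that $i=0$ or $j=0$: in the first case $g\in\langle b\rangle G'$, in the second $g\in\langle a\rangle G'$, and $G'$ lies in both of these subgroups. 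I may therefore assume $i\neq0$ and aim to show $j=0$.

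The next step is to reduce to the case $i=1$. Let $i^{-1}$ be an integer representing the inverse of $i$ in $\F_p$ and put $g_1=g^{i^{-1}}$. Then $g_1^p=(g^p)^{i^{-1}}=1$ and $g_1G'=ab^{j_1}G'$ with $j_1\equiv i^{-1}j\pmod p$; moreover, once we know $g_1\in\langle a\rangle G'$ it follows that $g=g_1^{\,i}\in\langle a\rangle G'$, because $\langle a\rangle G'$ is a subgroup. Hence it is enough to treat an element $g$ with $g^p=1$ and $gG'=ab^jG'$ and to prove $j=0$. For such a $g$, set $h:=a^{-1}g$; then $h\in\st_G(1)$, since $g$ and $a$ induce the same permutation on the first level (as $b^jG'\subseteq\st_G(1)$), and $hG'=b^jG'$, so that $g=ah$ with $h\in\st_G(1)$.

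Now the hypothesis does the rest: $g^p=1$ gives $(ah)^p=1$, hence $\psi\bigl((ah)^p\bigr)=(1,\dots,1)\in G'\times\cdots\times G'$, which is precisely condition (iii) of the preceding lemma applied to $h$. That lemma then yields $h\in G'$, whence $b^j\in G'$, and since $bG'$ has order $p$ this forces $j=0$. Therefore $g\in\langle a\rangle G'\subseteq\langle a\rangle G'\cup\langle b\rangle G'\cup G'$, as required. The only genuine move is the passage to the case $i=1$; after that, the assumption $g^p=1$ is handed verbatim to the preceding lemma, so I do not anticipate any real obstacle. One can also avoid the reduction altogether by computing $\psi\bigl((a^ih)^p\bigr)$ directly: each of its $p$ coordinates is a product of $h_1,\dots,h_p$ in some order, hence congruent to $h_1\cdots h_p$ modulo the abelian quotient $G/G'$, so the vanishing of all coordinates forces $h_1\cdots h_p\in G'$, and one invokes the equivalence of (i) and (ii) in the preceding lemma instead.
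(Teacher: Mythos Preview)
Your proof is correct and follows essentially the same route as the paper's: write $g\equiv a^ib^j\pmod{G'}$, assume $i\neq 0$, replace $g$ by a suitable power to reduce to $i=1$, and then apply the equivalence (iii)$\Leftrightarrow$(i) of the preceding lemma to $h=a^{-1}g\in\st_G(1)$ to force $j=0$. The paper presents exactly this argument in three lines, phrased as a proof by contradiction; your version is simply more explicit about the structure of $G/G'$ and about why $h\in\st_G(1)$.
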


\begin{proof}
Suppose for a contradiction that $g=a^rb^sf$, with $f\in G'$ and $r,s\not\equiv 0 \pmod p$.
By considering a suitable power of $g$, we may assume that $r=1$.
Since $\psi(g^p)=(1,\ldots,1)$, it follows from the previous lemma that $b^sf\in G'$, which is a contradiction.
\end{proof}

The following two results complete the proof of Theorem D.
We first require a somewhat technical definition, motivated by the following discussion.
Let $G$ be a GGS-group with defining vector $\mathbf{e}=(e_1,\dots,e_{p-1})$, and let
$g\in G'\smallsetminus \st_G(1)'$.
By Theorems 2.10, 2.11 and 2.14 in \cite{Alc}, we can uniquely write $g=b_1^{i_1}\dots b_p^{i_p}h$ for some
$h\in\st_G(1)'$ and some non-zero vector $(i_1,\dots,i_p)\in \mathbb{F}_p^p$ with $i_1+\dots +i_p=0$.
Since $\psi(\st_G(1)')\subseteq G'\times \overset{p}{\cdots} \times G'$, we have
\begin{equation}
\label{eqn:psi(g)}
\psi(g)
=
\psi(b_1^{i_1}\dots b_p^{i_p}) \psi(h)
=
(a^{m_1}b_{k_1}^{i_1} f_1,\dots,a^{m_p}b_{k_p}^{i_p} f_p),
\end{equation}
for some $k_j\in\{1,\ldots,p\}$ and $f_j\in G'$, and with
\begin{equation}
\label{eqn:mj}
(m_1,\ldots,m_p)=(i_1,\ldots,i_p)C,
\end{equation}
where $C$ is the circulant matrix
\[
C
=
\begin{pmatrix}
0 & e_1 & \cdots & e_{p-1}\\
e_{p-1} & 0 & \cdots & e_{p-2}\\
\vdots & \vdots & \ddots & \vdots\\
e_1 & e_2 &\cdots & 0
\end{pmatrix}.
\]

\begin{defi}
We say that a vector $\mathbf{e}\in\F_p^{p-1}$ satisfies {\em property TF\/} if for every non-zero vector
$(i_1,\dots,i_p)\in \mathbb{F}_p^p$ with $i_1+\cdots+i_p=0$, there exists $j\in \{1,\ldots,p\}$
such that $m_ji_j\neq 0$, where the $m_j$ are defined by (\ref{eqn:mj}).
\end{defi}

\begin{thm}
Let $G$ be the GGS-group defined by a vector $\mathbf{e}=(e_1,\dots,e_{p-1})$ satisfying property TF.
Then $G'$ is torsion-free.
\end{thm}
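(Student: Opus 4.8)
The plan is to prove that $G'$ is torsion-free by contradiction, using the fractal/branch structure of $G$ together with property TF to run an infinite-descent argument along the tree. Suppose $g\in G'$ is a non-trivial element of finite order; since $G\le\Aut_\sigma T$ is contained in a Sylow pro-$p$ subgroup, we may assume $g^p=1$. The strategy is to show that such a $g$ cannot exist by examining how $g$ behaves under $\psi$ and descending to a strictly ``smaller'' counterexample, ultimately contradicting the fact that the trivial element is the only element of $\bigcap_n\st_G(n)$.

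First I would deal with the case $g\in\st_G(1)'$. By Proposition \ref{branch}, for a non-symmetric defining vector we have $\psi(\st_G(1)')=G'\times\overset{p}{\cdots}\times G'$, so $\psi(g)=(g_1,\dots,g_p)$ with each $g_j\in G'$ and $g_j^p=1$; if $g\neq1$ then some $g_j\neq1$ is again a non-trivial torsion element of $G'$, but now lying one level deeper. Iterating, we would produce a non-trivial element of $\st_G(n)$ for arbitrarily large $n$ coming from the same ``size'', which we must rule out — so the real content is the case $g\in G'\smallsetminus\st_G(1)'$, and here property TF enters. Write $g=b_1^{i_1}\cdots b_p^{i_p}h$ with $h\in\st_G(1)'$ and $(i_1,\dots,i_p)\neq 0$, $\sum i_j=0$, and use \eqref{eqn:psi(g)}: $\psi(g)=(a^{m_1}b_{k_1}^{i_1}f_1,\dots,a^{m_p}b_{k_p}^{i_p}f_p)$ with $f_j\in G'$ and $(m_1,\dots,m_p)=(i_1,\dots,i_p)C$. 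Since $g^p=1$, each coordinate $a^{m_j}b_{k_j}^{i_j}f_j$ has order dividing $p$, and I would apply Lemma \ref{lem:order p} to each: such an element lies in $\langle a\rangle G'\cup\langle b\rangle G'\cup G'$. Projecting modulo $G'$ kills $f_j$ and shows $a^{m_j}b_{k_j}^{i_j}G'\in\langle a\rangle G'\cup\langle b\rangle G'\cup G'$; since $aG'$ and $bG'$ are independent generators of the elementary abelian group $G/G'$, this forces $m_j=0$ or $i_j\equiv0\pmod p$, i.e.\ $m_ji_j=0$ for every $j$. But property TF says precisely that there is some $j$ with $m_ji_j\neq0$ — contradiction. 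Hence no torsion element lives outside $\st_G(1)'$.

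It remains to close the loop on the first case. The key observation is that if $g\in\st_G(1)'$ has order $p$, then descending via $\psi$ produces torsion elements of $G'$ at every level, so in particular we get non-trivial elements of $\st_G(n)$ for all $n$ obtained from $g$ by iterated restriction; but an automorphism of $T$ fixing every vertex is the identity, so if the original $g\neq1$ there is a level $n$ at which at least one restriction is a non-trivial element of $G'\smallsetminus\st_G(1)'$ — at which point the previous paragraph applies and gives a contradiction. Concretely: among the restrictions $g_v$ for $v\in L_n$, if all of them lay in $\st_G(1)'$ for every $n$, then $g$ would fix all vertices, forcing $g=1$. So some restriction $g_v$ is a non-trivial torsion element of $G'$ not in $\st_G(1)'$, and property TF rules this out. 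The main obstacle in writing this up cleanly is bookkeeping the descent carefully — making sure that the $f_j$ really can be discarded, that the ``exists a level where some restriction escapes $\st_G(1)'$'' step is justified by the faithfulness of the action on $T$, and that one has the analogue of Proposition \ref{branch} available, i.e.\ that property TF is only invoked for non-symmetric vectors (which is automatic, since property TF in particular forces the vector to be non-constant, and one checks it fails for symmetric vectors — or one simply restricts attention to the non-symmetric case as in the statement of Theorem D).
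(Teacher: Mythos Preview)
Your proposal is correct and is essentially the paper's own argument: use property TF together with Lemma~\ref{lem:order p} to rule out $p$-torsion in $G'\smallsetminus\st_G(1)'$, and then descend through the tree for elements of $\st_G(1)'$ until some restriction escapes $\st_G(1)'$. Your worry about needing Proposition~\ref{branch} (and hence a non-symmetric vector) is unnecessary: all that is used is the containment $\psi(\st_G(1)')\subseteq G'\times\cdots\times G'$, which holds for any GGS-group simply because $\psi|_{\st_G(1)}$ is a homomorphism into $G\times\cdots\times G$.
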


\begin{proof}
The GGS-group $G$ lies in a Sylow pro-$p$ subgroup of $\Aut T$, and consequently a torsion element must be of
$p$-power order.
Thus it suffices to show that $G'$ has no elements of order $p$.

Let us consider an arbitrary element $g\in G'$.
Assume first that $g\in G'\smallsetminus \st_G(1)'$.
As explained above, $\psi(g)$ satisfies (\ref{eqn:psi(g)}) and, since $\mathbf{e}$ satisfies property TF, there is some
$j\in\{1,\dots,p\}$ such that $m_ji_j\neq 0$.
This, together with Lemma \ref{lem:order p}, implies that the $j$th component of $\psi(g)$ is not of order $p$, and therefore neither is $g$.

Now we assume that $g\in\st_G(1)'$.
Thus we can consider the largest integer $n\ge 0$ for which
\[
\psi_n(g)\in\st_G(1)'\times \overset{p^n}{\cdots} \times \st_G(1)'.
\]
Then $g\in\st_G(n+1)$ and, since $\psi(\st_G(1)')\subseteq G'\times \overset{p}{\cdots} \times G'$, the vector
$\psi_{n+1}(g)$ has a component in $G'\smallsetminus \st_G(1)'$.
By the previous paragraph, $g$ is not of order $p$ also in this case.
\end{proof}

\begin{cor}
Let $G$ be the GGS-group defined by the vector $\mathbf{e}=(1,\dots,1,\lambda)$, with $\lambda\in\F_p$.
Then the following hold:
\begin{enumerate}
\item
If $\lambda\ne 2$ then $G'$ is torsion-free.
\item
If $\lambda\ne 1,2$ then $G'$ is an infinite, finitely generated, residually finite, and torsion-free group whose profinite completion is a pro-$p$ group.
\end{enumerate}
\end{cor}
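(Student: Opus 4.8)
The plan is to deduce both parts from the results already in hand, reducing everything to a verification that the vector $\mathbf{e}=(1,\dots,1,\lambda)$ satisfies property TF whenever $\lambda\ne 2$. For part (i), once property TF is established, the preceding theorem gives immediately that $G'$ is torsion-free. So the only real work is the TF verification. First I would compute the circulant matrix $C$ associated to $\mathbf{e}=(1,\dots,1,\lambda)$: its rows are cyclic shifts of $(0,e_1,\dots,e_{p-1})=(0,1,\dots,1,\lambda)$. Given a nonzero vector $(i_1,\dots,i_p)\in\F_p^p$ with $\sum i_j=0$, I would find a clean closed form for $(m_1,\dots,m_p)=(i_1,\dots,i_p)C$. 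Writing $s=\sum_j i_j=0$, each $m_k$ is $s$ minus the "missing" diagonal term plus the correction coming from the single entry $\lambda$ rather than $1$; concretely $m_k = -i_k + (\lambda-1)\,i_{k+?}$ for an appropriate index shift, using $\sum i_j = 0$. (Here indices are mod $p$; I would pin down the exact shift from the displayed form of $C$.) So up to relabelling, $m_k = -i_k + (\lambda-1) i_{k+t}$ for a fixed $t$.

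The heart of part (i) is then the combinatorial claim: if $(i_1,\dots,i_p)$ is nonzero with zero sum and $\lambda\ne 2$, then $m_k i_k\ne 0$ for some $k$. Suppose not, so $m_k i_k=0$ for all $k$. For each $k$ with $i_k\ne 0$ we get $m_k=0$, i.e.\ $i_k=(\lambda-1)i_{k+t}$. I would argue by tracking the support $S=\{k: i_k\ne 0\}$ and iterating the relation along the arithmetic-progression-with-step-$t$ structure (which, since $\gcd(t,p)=1$, cycles through all residues): iterating $i_k=(\lambda-1)i_{k+t}$ around the full cycle of length $p$ forces $(\lambda-1)^p=1$ on the nonzero entries, hence $\lambda-1=1$ by Fermat, i.e.\ $\lambda=2$, unless the support is empty. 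Here I must be careful: the relation $i_k=(\lambda-1)i_{k+t}$ is only guaranteed when $i_k\ne 0$, so I first need that the support is closed under the shift $k\mapsto k+t$ (if $i_k\ne 0$ then $i_{k+t}\ne 0$, since $i_k=(\lambda-1)i_{k+t}$ with $\lambda\ne 1$ — and the case $\lambda=1$ is the constant vector, excluded separately or handled trivially since then $m_k=-i_k$ and $m_ki_k=-i_k^2\ne 0$ on the support). Support closed under a generator of $\Z/p$ and nonempty means support is everything, and then the cyclic iteration gives $(\lambda-1)^p=1$, so $\lambda=2$: contradiction. I expect this cyclic-iteration step, together with correctly reading off the index shift $t$ from $C$, to be the main obstacle; everything else is bookkeeping.

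For part (ii), I would assemble the remaining properties. Residual finiteness is automatic since $G$ acts faithfully on $T$, hence so does $G'$. Finite generation of $G'$ follows because $G'$ has finite index in $G$ (indeed $|G:G'|=p^2$ by Proposition~\ref{multi_lemmas}(iii)) and $G$ is finitely generated. The profinite completion of $G'$ is pro-$p$: by Theorem~A the group $G$ has the congruence subgroup property, which is hereditary for finite-index subgroups, so $G'$ also has it and its profinite completion equals its congruence completion, which is a closed subgroup of $\Aut_\sigma T$ (Remark~\ref{closure_in_sylow}) and hence pro-$p$; alternatively, $|G':\st_{G'}(n)|$ divides $|G:\st_G(n)|$, a power of $p$. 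Torsion-freeness is part (i). Finally, $G'$ is infinite: for $\lambda\ne 1$ the vector is non-symmetric (the first and last entries differ after accounting for symmetry — more precisely $e_1=1\ne\lambda=e_{p-1}$), so by Proposition~\ref{branch}, $\psi(\st_G(1)')=G'\times\cdots\times G'$, which forces $G'$ (indeed $G$) to be infinite; or simply note $G$ is a branch group, hence infinite, and $G'$ has finite index. Collecting these gives the statement; the condition $\lambda\ne 1,2$ is exactly what we need ($\lambda\ne 2$ for torsion-freeness via part (i), and $\lambda\ne 1$ to avoid the constant vector $\mathcal G$, for which Theorems B and C show the completion is not pro-$p$ in the congruence sense and the structure degenerates).
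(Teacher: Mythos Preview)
Your proposal is correct and follows essentially the same route as the paper: both reduce part~(i) to verifying property~TF for $\mathbf{e}=(1,\dots,1,\lambda)$ via the relation $m_k=-i_k+(\lambda-1)i_{k+1}$ (obtained, as the paper makes explicit, by replacing $C$ with $C-J$ since $(i_1,\dots,i_p)J=0$), and then iterate this relation around the cycle to force $(\lambda-1)^p=1$, hence $\lambda=2$, if no coordinate satisfies $m_ji_j\ne 0$. Your treatment of part~(ii) spells out what the paper leaves implicit with ``Clearly, we only need to prove (i)'', and is likewise correct.
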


\begin{proof}
Clearly, we only need to prove (i).
By the previous theorem, it suffices to show that $\mathbf{e}$ satisfies property TF.
Consider then a non-zero vector $(i_1,\dots,i_p)\in \mathbb{F}_p^p$ with $i_1+\cdots+i_p=0$.
If $J$ is the matrix over $\F_p$ with all entries equal to $1$, then $(i_1,\dots,i_p)J=(0,\dots,0)$ and, by (\ref{eqn:mj}), we have
\begin{equation}
\label{eqn:m's and i's}
\begin{split}
(m_1,\dots,m_p)
&=
(i_1,\dots,i_p) (C-J)
\\[5pt]
&=
(i_1,\dots,i_p)
\begin{pmatrix}
-1 & 0 & \cdots &  0 & \lambda-1
\\
\lambda-1 & -1 & \cdots & 0 & 0
\\
\vdots & \vdots &  & \vdots & \vdots
\\
0 & 0 &\cdots & \lambda-1 & -1
\end{pmatrix}.
\end{split}
\end{equation}
Since the last matrix is circulant, the same equality holds for any cyclic permutation of the tuples
$(m_1,\ldots,m_p)$ and $(i_1,\ldots,i_p)$.
Thus we may assume without loss of generality that $i_1\ne 0$.
If $m_1\ne 0$, we are done. 
If not, let $j\geq 2$ be as large as possible subject to the condition $m_1=\cdots=m_{j-1}=0$.
Then by (\ref{eqn:m's and i's}) we have $i_1=(\lambda-1)^{j-1}i_j$.
Consequently $i_j\ne 0$, and we are done if $j\le p$.
Otherwise, if $m_1,\dots,m_p$ are all $0$, we get $i_1=(\lambda-1)^p i_1=(\lambda-1) i_1$.
This is a contradiction, since $i_1\ne 0$ and $\lambda\ne 2$.
\end{proof}

\bibliographystyle{abbrv}
\bibliography{on_the_congruence_subgroup_property_for_GGS-groups}

\begin{thebibliography}{10}

\bibitem{Barnea}
Y.~Barnea.
\newblock When is the profinite completion a pro-$p$ group?
\newblock \url{http://mathoverflow.net/q/179381}.
\newblock (version: 2014-08-25).

\bibitem{Bartholdi_Parabolic}
L.~Bartholdi and R.~I. Grigorchuk.
\newblock On parabolic subgroups and {H}ecke algebras of some fractal groups.
\newblock {\em Serdica Math. J.}, 28(1):47--90, 2002.

\bibitem{bartholdiCSP}
L.~Bartholdi, O.~Siegenthaler, and P.~A. Zalesskii.
\newblock The congruence subgroup problem for branch groups.
\newblock {\em Israel J. Math.}, 187(1):419--450, 2012.

\bibitem{bass1967}
H.~Bass, J.~Milnor, and J.-P. Serre.
\newblock Solution of the congruence subgroup problem for {$SL_n$} ({$n\geq
  3$}) and {$Sp_{2n}$} ({$n\geq 2$}).
\newblock {\em Publ. Math. Inst. Hautes {\'E}tudes Sci.}, 33(1):59--137, 1967.

\bibitem{Baumslag_TopicsCombinatorialGroupTheory}
G.~Baumslag.
\newblock {\em Topics in combinatorial group theory}.
\newblock Lectures in Mathematics ETH Z\"urich. Birkh\"auser Verlag, Basel,
  1993.

\bibitem{Alc2}
G.~A. Fern{\'a}ndez-Alcober.
\newblock An introduction to finite $p$-groups: regular $p$-groups and groups
  of maximal class.
\newblock {\em Mat. Contemp.}, 20:155--226, 2001.

\bibitem{Alc}
G.~A. Fern{\'a}ndez-Alcober and A.~Zugadi-Reizabal.
\newblock G{GS}-groups: order of congruence quotients and {H}ausdorff
  dimension.
\newblock {\em Trans. Amer. Math. Soc.}, 366(4):1993--2017, 2014.

\bibitem{AleCSP}
A.~Garrido.
\newblock On the congruence subgroup problem for branch groups.
\newblock {\em Israel J. Math. (to appear)}.
\newblock http://arxiv.org/abs/1405.3237.

\bibitem{Aletesis}
A.~Garrido.
\newblock Aspects of branch groups.
\newblock D.Phil. thesis, University of Oxford, 2015.

\bibitem{grig}
R.~I. Grigorchuk.
\newblock On {B}urnside's problem on periodic groups.
\newblock {\em Funktsional. Anal. i Prilozhen.}, 14(1):53--54, 1980.

\bibitem{Gri}
R.~I. Grigorchuk.
\newblock Just infinite branch groups.
\newblock In M.~Sautoy, D.~Segal, and A.~Shalev, editors, {\em New Horizons in
  pro-p Groups}, volume 184 of {\em Progress in Mathematics}, pages 121--179.
  Birkhäuser, Boston, 2000.

\bibitem{guptasidki}
N.~Gupta and S.~Sidki.
\newblock On the {B}urnside problem for periodic groups.
\newblock {\em Math. Z.}, 182(3):385--388, 1983.

\bibitem{lee-mck}
C.~R. Leedham-Green and S.~McKay.
\newblock {\em The Structure of Groups of Prime Power Order}.
\newblock London Mathematical Society Monographs New Series \textbf{27}. Oxford
  University Press, Oxford, 2002.

\bibitem{Pervova_GGS}
E.~L. Pervova.
\newblock Maximal subgroups of some non locally finite {$p$}-groups.
\newblock {\em Internat. J. Algebra Comput.}, 15(5-6):1129--1150, 2005.

\bibitem{Pervova_CSP_GGS}
E.~L. Pervova.
\newblock Profinite completions of some groups acting on trees.
\newblock {\em J. Algebra}, 310(2):858--879, 2007.

\bibitem{RibesZalesskii}
L.~Ribes and P.~Zalesskii.
\newblock {\em Profinite groups}.
\newblock Ergebnisse der Math.(3) \textbf{40}. Springer-Verlag, Berlin, second
  edition, 2010.

\bibitem{Vovkivsky}
T.~Vovkivsky.
\newblock Infinite torsion groups arising as generalizations of the second
  {G}rigorchuk group.
\newblock In {\em Algebra ({M}oscow, 1998)}, pages 357--377. de Gruyter,
  Berlin, 2000.

\end{thebibliography}

\end{document}